\newtheorem{mydef}{Definition}[section]
\newtheorem{lem}[mydef]{Lemma}
\newtheorem{lemma}[mydef]{Lemma}
\newtheorem{thm}[mydef]{Theorem}
\newtheorem{cor}[mydef]{Corollary}
\newtheorem{defin}[mydef]{Definition}
\newtheorem{definition}[mydef]{Definition}
\newtheorem{example}[mydef]{Example}
\newtheorem{remark}[mydef]{Remark}
\newtheorem{fact}[mydef]{Fact}
\newcommand{\defn}[1]{\emph{#1}}
\newcommand{\fct}[2]{{}^{#1}#2}
\newcommand{\ba}{\bar{a}}
\newcommand{\bb}{\bar{b}}
\newcommand{\bc}{\bar{c}}
\newcommand{\bd}{\bar{d}}
\newcommand{\bx}{\bar{x}}
\newcommand{\dom}[1]{\text{dom}(#1)}
\newcommand{\ran}[1]{\text{ran}(#1)}
\newcommand{\seq}[1]{\langle #1 \rangle}
\newcommand{\rest}{\upharpoonright}
\newcommand{\st}{\colon}
\newcommand{\defas}{\mathrel{\mathop:}=}
\newcommand{\bL}{\mathcal{L}}
\newcommand{\mucard}{\eta}
\newcommand{\K}{\mathbf{K}}
\newcommand{\leap}[1]{\le_{#1}}
\newcommand{\lea}{\leap{\K}}
\newbox\noforkbox \newdimen\forklinewidth
\noforkbox\hbox{\lower 2pt\box1\lower
2pt\box0\relax}
\def\unionstick{\mathop{\copy\noforkbox}\limits}
\newcommand{\nf}{\unionstick}
\newcommand{\nfs}[4]{#2 \nf_{#1}^{#4} #3}
\def\1nf{\unionstick^{(1)}}
\def\2nf{\unionstick^{(2)}}
\def\3nf{\unionstick^{(3)}}
\newcommand{\tp}{\text{tp}}
\newcommand{\tpqf}{\tp_{\text{qf}}}
\newcommand{\gtp}{\text{gtp}}
\newcommand{\gS}{\text{gS}}
\newcommand{\hanf}[1]{h (#1)}
\newcommand{\Axfr}{\text{AxFr}_1}
\newcommand{\Aut}{\operatorname{Aut}}
\newcommand{\cl}{\operatorname{cl}}
\newcommand{\LS}{\text{LS}}
\newcommand{\BI}{\mathbf{I}}
\newcommand{\BJ}{\mathbf{J}}
\newcommand{\Av}{\text{Av}}
\newcommand{\comment}[1]{}
\title{Categoricity in multiuniversal classes}
\date{\today \\
AMS 2010 Subject Classification: Primary 03C48. Secondary: 03C45, 03C52, 03C55, 03C75.}
\keywords{Abstract elementary classes; Universal classes; Multiuniversal classes; Categoricity; Tameness}
\author{Nathanael Ackerman}
\email{nate@math.harvard.edu}
\urladdr{http://math.harvard.edu/\textasciitilde nate/}
\address{Department of Mathematics, Harvard University, Cambridge, Massachusetts, USA}
\author{Will Boney}
\email{wboney@math.harvard.edu}
\urladdr{http://math.harvard.edu/\textasciitilde wboney/}
\address{Department of Mathematics, Harvard University, Cambridge, Massachusetts, USA}
\thanks{This material is based upon work done while
  the second author was supported by the National Science Foundation under Grant No.\ DMS-1402191.}
\author{Sebastien Vasey}
\email{sebv@math.harvard.edu}
\urladdr{http://math.harvard.edu/\textasciitilde sebv/}
\address{Department of Mathematics, Harvard University, Cambridge, Massachusetts, USA}
\begin{document}

\begin{abstract}
  The third author has shown that Shelah's eventual categoricity conjecture holds in universal classes: class of structures closed under isomorphisms, substructures, and unions of chains. We extend this result to the framework of multiuniversal classes. Roughly speaking, these are classes with a closure operator that is essentially algebraic closure (instead of, in the universal case, being essentially definable closure). Along the way, we prove in particular that Galois (orbital) types in multiuniversal classes are determined by their finite restrictions, generalizing a result of the second author.
\end{abstract}

\maketitle


\section{Introduction}

One of the most important test questions in the study of abstract elementary classes (AECs) is Shelah's eventual categoricity conjecture, i.e.\ the statement that if an AEC is categorical in any ``sufficiently large'' cardinal then it must be categorical on a tail of cardinals. This conjecture, if true, would be a generalization of Morley's theorem for first order theories to AECs. 

An important class of AECs are the \emph{universal classes}: classes of structures closed under isomorphism, substructures and union of $\subseteq$-increasing chains. Essentially by a result of Tarski, universal classes are precisely the classes of models of a universal sentence in $\bL_{\infty, \omega}$ (see \cite{tarski-th-models-i}).

In a tour de force, the third author \cite{ap-universal-apal, categ-universal-2-selecta} proved Shelah's eventual categoricity conjecture for universal classes.

\begin{fact}[{\cite[Theorem 7.3]{categ-universal-2-selecta}}]
Let $\K$ be a universal class.  If $\K$ is categorical in some $\lambda \geq \beth_{\beth_{\left(2^{|\tau(\K)|+\aleph_0}\right)^+}}$, then $\K$ is categorical in all $\lambda \geq \beth_{\beth_{\left(2^{|\tau(\K)|+\aleph_0}\right)^+}}$.
\end{fact}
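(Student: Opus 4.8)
The plan is to derive this as the endpoint of the structure theory of tame abstract elementary classes, using the special syntactic features of universal classes at every stage. The first step is to record what the universal axiomatization buys us. Because $\K$ consists of the models of a universal $\Ll_{\infty,\omega}$-sentence, it is an AEC with $\LS(\K) = |\tau(\K)| + \aleph_0$; it is closed under intersections of substructures, so it carries a \emph{canonical, functional closure operator} (essentially definable closure); and, crucially, Galois types are determined by their finite restrictions, i.e.\ $\K$ is fully $(<\aleph_0)$-tame and short. This last property is the result of the second author that the body of the present paper generalizes, and it is what lets all subsequent local arguments propagate to arbitrary cardinals.

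The second and conceptually hardest step is to turn the single categoricity hypothesis into global structural hypotheses on a tail of cardinals: amalgamation, joint embedding, no maximal models, stability, and superstability. This is where the threshold $\beth_{\ehanf{\LS(\K)}}$ is forced. Shelah's presentation theorem lets one build Ehrenfeucht--Mostowski models, and the Hanf number $\ehanf{\LS(\K)}$ for omitting Galois types in the associated $\Ll_{\infty,\omega}$-theory controls when categoricity forces saturation: categoricity above this inner Hanf number already makes the categoricity model model-homogeneous and supplies amalgamation and joint embedding over the relevant interval, with the nesting of two $\beth$ functions reflecting the iterated use of Hanf-number and omitting-types arguments needed to run everything uniformly across the tail. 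From amalgamation together with tameness and categoricity I would then extract superstability — uniqueness and saturation of limit models above the threshold. I expect this extraction of amalgamation and superstability from a lone categoricity cardinal, with no hypotheses beyond the universal structure, to be the principal obstacle, since universal classes need not amalgamate on their own.

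With superstability and tameness available, the third step builds an independence relation. The intersection-closedness of $\K$ provides canonical prime/minimal models, which one combines with $\mu$-nonsplitting and superstability to promote a local notion into a good $\mu$-frame, satisfying existence, extension, uniqueness, symmetry, local character and continuity over models of size $\mu$. Full tameness and shortness are precisely what is required to glue these local frames into a single \emph{global} independence relation valid in all cardinals, with canonicity of forking guaranteeing that the pieces agree.

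The global good frame finally drives a two-directional categoricity transfer. Upward one follows the Grossberg--VanDieren strategy: amalgamation, tameness and the uniqueness of saturated limit models propagate categoricity from $\lambda$ to every larger cardinal. Downward one uses superstability: above the Hanf threshold the categoricity model is saturated, and the good frame forces every model of every size $\mu$ in the interval $[\beth_{\ehanf{\LS(\K)}}, \lambda]$ to be saturated as well, hence unique of its size. Combining the two directions gives categoricity throughout $\left[\beth_{\ehanf{\LS(\K)}}, \infty\right)$, which is the assertion of the Fact.
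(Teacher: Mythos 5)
Your proposal founders at exactly the point you yourself flag as the principal obstacle: the extraction of amalgamation (and hence superstability, good frames, and the Grossberg--VanDieren machinery) from a single categoricity cardinal. You assert that categoricity above the inner Hanf number ``already makes the categoricity model model-homogeneous and supplies amalgamation and joint embedding over the relevant interval,'' but this is not a known consequence of EM-model and omitting-types arguments alone; in the absence of amalgamation one cannot even work with Galois saturation in the usual way (types over a model need not extend, unions of chains of saturated models need not be saturated, etc.). Deriving amalgamation from categoricity in universal classes is precisely the hard technical content of the cited result, and it is obtained as an \emph{output} of the argument, not an input. Your plan therefore assumes the conclusion of its hardest step and then runs standard tame-AEC-with-amalgamation structure theory on top of it.

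The actual proof inverts your order of operations. One first exploits the syntactic features of universal classes --- intersections, the identification of Galois types with quantifier-free types (shortness), finitary isolation, and a compactness theorem for quantifier-free types --- together with the failure of the order property (which \emph{does} follow from categoricity via EM models) to build, via Shelah's theory of averages of convergent sequences, a modified ordering $\leap{}^{\chi^+,\mu^+}$ on $K$ and a $4$-ary independence relation satisfying the axioms $\Axfr$ (existence, uniqueness, symmetry, base enlargement) on the resulting \emph{weak} AEC $\K^0$, with no amalgamation hypothesis anywhere. In particular, the existence axiom is proved by realizing a strongly finitely satisfiable quantifier-free type via the compactness theorem for such classes, not by amalgamating models. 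Amalgamation, good frames, and the eventual two-directional transfer are then consequences of this $\Axfr$ structure. Your first step (tameness and shortness of universal classes) and your final step (the transfer once frames exist) are consistent with the real argument, but the bridge between them is missing, and the route you propose for that bridge does not go through.
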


In this paper, we aim to generalize this result to a larger class of AECs which we call \emph{multiuniversal}, see Definition \ref{multiuniv-def}. While there are many interesting universal classes (locally finite groups, valued fields with fixed value group, etc.), the restrictions on the definition of universal classes make it difficult to have universal classes that are both mathematically interesting and model-theoretically well-behaved.  Recent work of Hyttinen and Kangas \cite{hyttinen-kangas-universal-mlq} have given some confirmation of this by building on the third author's work to show that in any universal class that is categorical in a high-enough successor, the models must eventually look like either vector spaces or be disintegrated (i.e.\ look like sets). The motivation for the present generalization stems from the goal of slightly weakening the notion of universal class while still maintaining many of the nice mathematical properties of universal classes.

In aiming to generalize beyond universal classes, we took algebraically closed fields as our prototype.  Fields are easily seen to be universal classes, but are model-theoretically intractable at that level of generality.  Algebraically closed fields are the most model-theoretically well-behaved subclass of fields, but are not universal as they are not closed under subfield. The root of the issue is the difference between definable and algebraic closure: a polynomial (like $x^2 + 2$) can have several, but only finitely-many, ``undistinguishable'' roots. Thus we define a multiuniversal class to be an AEC where any element inside the closure of a set has \emph{finitely-many} other equivalent elements: its type is algebraic (Definition \ref{alg-def}). To formalize the idea of the closure of a set, we use the notion of an AEC admitting intersections (Definition \ref{inter-def}).

Several examples beyond algebraically closed fields are listed in Example \ref{mu-example}.  A further alternate framework is to think of multiuniversal classes as universal classes that take value, not in the category of set, but in the category whose objects are sets $X$ along with the collection of finite subsets of $X$. Let us also mention that we can also characterize multiuniversal classes as certain kind of accessible categories (as in \cite{multipres-pams}), see Section \ref{top-sec}. We thank Jiří Rosický for suggesting this category-theoretic characterization.

\comment{As is well known, given any vocabulary\SEB{I changed the terminology from language to vocabulary to keep consistency with the rest of this paper and my other papers on universal classes.} $\tau$ with function symbols there is a relational vocabulary $\tau^*$ and a first order $\tau^*$-theory interdefinable with the empty theory in $\tau$. The language $\tau^*$ has all of the same relations as $\tau$ but for every $n$-ary function symbol of $\tau$, $\tau^*$ has an $(n+1)$-ary relation symbol. The corresponding $\tau^*$ theory simply says that each new $(n+1)$-ary relation symbol in $\tau$ is the graph of a function. While for most practical purposes a $\tau$-structure $M$ serves the same purpose as the corresponding $\tau^*$-structure $M^*$ where functions are replaced by their graphs, the notions of $\tau$-substructure and $\tau^\ast$-substructure do \emph{not} coincide. This is because $\tau^*$-substructures are not closed under applications of functions. 

Thus we can think of the adding of function symbols as allowing us to require that $\tau$-substructure be closed under elements which are, in some sense, uniquely definable from the substructure. This suggests that a natural weakening of the notion of a universal class would be one where we require substructures to be closed under elements which are \emph{algebraic} (i.e.\ satisfy an equation with only finitely-many solutions) over the substructure. These will be what we call \emph{multiuniversal classes} (see Definition \ref{multiuniv-def} for a more precise definition)}

The main result of the present paper is Corollary \ref{main-cor}: Shelah's eventual categoricity conjecture holds for multiuniversal AECs. The proof goes along the same lines as the third author's proof of the corresponding result for universal classes \cite{categ-universal-2-selecta}. An important step is to establish that any (not necessarily categorical) multiuniversal AEC has a strong locality property: its Galois (i.e.\ orbital) types are determined by their finite restrictions. In technical jargon, multiuniversal classes are fully $(<\aleph_0)$-tame and type-short, see Theorem \ref{tameness-thm}. For universal classes, this is due to the second author and essentially follows from the fact that a partial isomorphism from $A$ to $B$ can be extended \emph{uniquely} to the closure of $A$ under the functions of the ambient model. In the setup of multiuniversal classes, there is no longer such a unique extension (again, think of the case of algebraically closed fields of characteristic zero: an automorphism of $\mathbb{Q}$ can be extended in two ways to $\mathbb{Q} (\sqrt{-2})$). Still given any partial isomorphism $p$ from $A$ to $B$ and $a$ in the closure of $A$, there are only finitely-many extensions of $p$ with domain $\dom{p} \cup \{a\}$. This will allow us to prove the result with a compactness argument (using Tychonoff's theorem). This can also be construed as a finite-injury proof.

\subsection{Acknowledgments}

We thank the referee for a thorough report that led to a clear improvement in the presentation of the paper.

\section{Preliminaries}

We assume the reader is familiar with the basics of AECs, as covered for example in Chapters 4 and 8 of \cite{baldwinbook09}. We will make heavy use of Galois (orbital) types and use the notation from \cite[\S2]{sv-infinitary-stability-afml}. In particular, we let $\gtp (\bb / A; N)$ denote the Galois type of the sequence $\bb$ over $A$ as computed inside $N$, $\gS^\alpha (A; N)$ denotes the set of Galois types over $A$ of sequences of length $\alpha$ from $N$, and $\gS (A; N) = \gS^1 (A; N)$. We write $\K$ for an AEC, and write $\lea$ for the ordering on $\K$. When $\K$ is clear from context, we may drop it from the name of a concept. We never distinguish between a structure and its universe, as which one is meant will always be clear from context. For example, if $A$ is a set, $A \subseteq M$ means that $A$ is a subset of the universe of $M$. We may also write $\ba \in M$ to mean that $\ba$ is a sequence of elements from the universe of $M$.

The following was introduced for AECs by Baldwin and Shelah \cite[Definition 1.2]{non-locality}.

\begin{defin}\label{inter-def}
  Suppose that $\K$ is an AEC. For any $A \subseteq N \in \K$ define
  \[
  \cl^N_{\K}(A) \defas \bigcap\{ N_0 \st N_0 \lea N\text{ and }A \subseteq N_0\}.
  \] Note again that we will omit $\K$ when it is clear from context and we will not distinguish between the set $\cl^N(A)$ the the corresponding $\tau(\K)$-structure induced from the $\tau(\K)$-structure on $N$. We sometimes write $\cl^N (A \bb)$ instead of the more proper $\cl^N (A \cup \ran{\bb})$, where $\ran{\bb}$ is the set of elements of the sequence $\bb$ (i.e.\ its range when considered as a function).

We say that $\K$ is an \defn{AEC with intersections} (or \emph{$\K$ has intersections}, or \emph{$\K$ admits intersections}) if for all $N \in \K$ and $A \subseteq N$, $\cl^N(A) \lea N$. 
\end{defin}

We will heavily (and sometimes without comments) use the following basic properties of AECs with intersections \cite[Propositions 2.14 and 2.18]{ap-universal-apal}. 

\begin{fact}\label{inter-basics}
  Let $\K$ be an AEC with intersections.

  \begin{enumerate}
  \item If $f: M \to N$ is a $\K$-embedding and $A \subseteq M$, then $f [\cl^{M} (A)] = \cl^N (f[A])$. In particular, if $M \lea N$ then $\cl^M (A) = \cl^N (A)$.
  \item (Finite character) Let $M \in \K$ and let $a \in \cl^M (B)$. Then there exists a finite $B_0 \subseteq B$ such that $a \in \cl^M (B_0)$.
  \item (Equality of types) Let $M_1, M_2 \in \K$, $A \subseteq M_1 \cap M_2$. Then\footnote{Note that by itself the statement ``$A \subseteq M_1 \cap M_2$'' is somewhat meaningless: the set $A$ may be embedded in a different way inside $M_1$ and $M_2$. However both sides of the characterization of equality of types imply that $A$ is embedded in the same way in $M_1$ and $M_2$.} $\gtp (\bb_1 / A; M_1) = \gtp (\bb_2 / A; M_2)$ if and only if there exists an isomorphism $f: \cl^{M_1} (A\bb_1) \to \cl^{M_2} (A \bb_2)$ that fixes $A$ and sends $\bb_1$ to $\bb_2$.
  
  \end{enumerate}
\end{fact}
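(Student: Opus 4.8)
\emph{The plan} is to prove the three items in order, extracting first the sublemma that $M \lea N$ and $A \subseteq M$ imply $\cl^M(A) = \cl^N(A)$; both the ``in particular'' clause and all of (1) follow from it. For $\cl^N(A) \subseteq \cl^M(A)$, every $M_0 \lea M$ with $A \subseteq M_0$ also satisfies $M_0 \lea N$ by transitivity of $\lea$, so $M_0$ lies in the family defining $\cl^N(A)$ and hence $\cl^N(A) \subseteq M_0$; intersecting over all such $M_0$ gives the inclusion. For the reverse, $M$ is itself a member of the family defining $\cl^N(A)$, so $\cl^N(A) \subseteq M$; since $\cl^N(A) \lea N$ by the intersection property and $M \lea N$, coherence yields $\cl^N(A) \lea M$, whence $\cl^N(A)$ is a member of the family defining $\cl^M(A)$ and $\cl^M(A) \subseteq \cl^N(A)$. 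For (1), I would write a $\K$-embedding $f \colon M \to N$ as an isomorphism onto $f[M] \lea N$; because $\cl$ is defined purely from $\lea$ and $\lea$ is isomorphism-invariant, $f[\cl^M(A)] = \cl^{f[M]}(f[A])$, and the sublemma applied to $f[M] \lea N$ gives $\cl^{f[M]}(f[A]) = \cl^N(f[A])$.

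For (2) I would show that $\cl^M(B)$ is the $\lea$-directed union of its finite approximations. First, for $B_0 \subseteq B_1 \subseteq B$ one has $\cl^M(B_0) \lea \cl^M(B_1)$: indeed $\cl^M(B_1) \lea M$ contains $B_0$, so it belongs to the family defining $\cl^M(B_0)$, giving $\cl^M(B_0) \subseteq \cl^M(B_1)$, and coherence (both are $\lea M$) promotes this to $\lea$. Hence $\{\cl^M(B_0) : B_0 \subseteq B \text{ finite}\}$ is a $\lea$-directed system; by closure of AECs under directed unions, its union $U$ satisfies $U \lea M$ and $B \subseteq U$. Then $U \lea M$ with $B \subseteq U$ gives $\cl^M(B) \subseteq U$, while $\cl^M(B_0) \subseteq \cl^M(B)$ for each finite $B_0$ gives $U \subseteq \cl^M(B)$; so $\cl^M(B) = \bigcup_{B_0} \cl^M(B_0)$ and any $a \in \cl^M(B)$ already lies in some $\cl^M(B_0)$.

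For (3), set $C_\ell \defas \cl^{M_\ell}(A\bb_\ell) \lea M_\ell$. ($\Rightarrow$) Equality of Galois types means the triples are linked by a finite chain of the atomic amalgamation relation $\Eat$, so it suffices to produce the isomorphism for a single $\Eat$-step and then compose along the chain. Given $f_\ell \colon M_\ell \to N$ with $f_1 \rest A = f_2 \rest A$ and $f_1(\bb_1) = f_2(\bb_2)$, item (1) gives $f_\ell[C_\ell] = \cl^N(f_\ell[A\bb_\ell])$, and since $f_1[A\bb_1] = f_2[A\bb_2]$ these two closures coincide inside $N$; then $(f_2 \rest C_2)^{-1} \circ (f_1 \rest C_1)$ is the desired isomorphism $C_1 \to C_2$ fixing $A$ and sending $\bb_1$ to $\bb_2$. ($\Leftarrow$) Here the point is to avoid assuming amalgamation: since $C_\ell \lea M_\ell$, the identity embedding witnesses $\gtp(\bb_\ell / A; C_\ell) = \gtp(\bb_\ell / A; M_\ell)$, so it is enough to show $\gtp(\bb_1 / A; C_1) = \gtp(\bb_2 / A; C_2)$; but the given isomorphism $f \colon C_1 \to C_2$ is itself a $\K$-embedding, so taking $N = C_2$, $g_1 = f$, and $g_2 = \id_{C_2}$ directly witnesses this single $\Eat$-instance.

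The steps carrying real content are the directed-union step in (2) and the bookkeeping in (3). Closure of AECs under $\lea$-directed unions is standard but must be invoked rather than re-derived from the chain axioms. In (3) the subtle point is that equality of Galois types is the \emph{transitive closure} of $\Eat$, so the forward direction needs composition of isomorphisms across the whole chain, while the backward direction relies on the monotonicity of Galois types (invariance under passing to a strong substructure containing the data) precisely so that no amalgamation of $M_1$ and $M_2$ is required. This last observation is what makes the equivalence hold in an arbitrary AEC with intersections rather than only under amalgamation, and it is the part I expect to be the main obstacle to getting right.
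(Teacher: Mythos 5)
Your proof is correct. Note that the paper itself gives no proof of this statement: it is quoted as a Fact with a citation to Propositions 2.14 and 2.18 of the third author's universal-classes paper, and your argument (coherence plus isomorphism-invariance for (1), the directed union of closures of finite subsets for (2), and the single-$\Eat$-step analysis with monotonicity of Galois types for (3), avoiding any amalgamation hypothesis) is exactly the standard argument found there.
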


The following essentially also appears in \cite[Definition VI.1.15(2)]{shelahaecbook2}.

\begin{definition}\label{alg-def}
  In a fixed AEC $\K$, a type $p$ over a set $A$ is \emph{$\mucard$-algebraic} if for any $N \in \K$ whose universe contains $A$, the set of realizations of $p$ in $N$ has cardinality strictly less than $\mucard$. When $\mucard = \aleph_0$, we omit it and just call $p$ \emph{algebraic}.
\end{definition}

The following basic properties of algebraicity in AECs with intersections will be useful:

\begin{lem}\label{intersec-alg}
  Let $\K$ be an AEC with intersections, let $\alpha$ be an ordinal, and let $p \in \gS^\alpha (A; N)$ be a type realized in $\cl^N (A)$. 
  \begin{enumerate}
      \item For any $M \in \K$ containing $A$, if $\bb \in M$ realizes $p$, then $\bb \in \cl^M(A)$.
      \item For any $M \in \K$, if $p$ is realized in $M$, then $p$ has as many realizations in $M$ as in $N$.
  \end{enumerate}

  In particular, any type that is $\mucard$-algebraic is $\mucard_0^+$-algebraic for some $\mucard_0 < \mucard$.
\end{lem}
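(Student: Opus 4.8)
The plan is to extract from the hypothesis a single rigid isomorphism between closures, and then read off both the localization of realizations (part (1)) and the invariance of their number (part (2)) from it. Throughout I will freely use that $\cl^N(-)$ is monotone and idempotent (both immediate from the definition, since $\cl^N(A) \lea N$) together with the two parts of Fact \ref{inter-basics}.

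For part (1), fix $\bc \in \cl^N(A)$ realizing $p$, and let $\bb \in M$ be a realization of $p$ with $A \subseteq M$. Since $\gtp(\bb/A; M) = p = \gtp(\bc/A; N)$, equality of types (Fact \ref{inter-basics}(3)) yields an isomorphism $f \colon \cl^M(A\bb) \to \cl^N(A\bc)$ fixing $A$ and sending $\bb$ to $\bc$. The key observation is that $\bc \in \cl^N(A)$ forces $\cl^N(A\bc) = \cl^N(A)$ (by monotonicity and idempotence), so $f$ lands in $\cl^N(A)$. Applying Fact \ref{inter-basics}(1) to the $\K$-embedding $f$, and using that $\cl^M(A\bb) \lea M$ and $\cl^N(A) \lea N$ so that closures of $A$ computed inside these submodels agree with those computed in $M$, resp.\ $N$, gives $f[\cl^M(A)] = \cl^N(A) = f[\cl^M(A\bb)]$. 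As $f$ is injective and $\cl^M(A) \subseteq \cl^M(A\bb)$, this forces $\cl^M(A\bb) = \cl^M(A)$, whence $\bb \in \cl^M(A)$.

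For part (2), the computation just performed shows that whenever $\bb \in M$ realizes $p$ we in fact have $\cl^M(A\bb) = \cl^M(A)$, so the map $f$ above is an isomorphism $\cl^M(A) \to \cl^N(A)$ fixing $A$. Since realizations of $p$ in $M$ all lie in $\cl^M(A)$ by part (1), and $\cl^M(A) \lea M$ preserves Galois types, the realizations of $p$ in $M$ are exactly those in $\cl^M(A)$, and likewise for $N$ and $\cl^N(A)$. An isomorphism fixing $A$ preserves Galois types over $A$, so $f$ restricts to a bijection between the realizations of $p$ in $\cl^M(A)$ and those in $\cl^N(A)$; hence $p$ has equally many realizations in $M$ as in $N$.

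Finally, for the last assertion, recall $p$ is realized in $\cl^N(A) \subseteq N$, so part (2) applies. Let $\mucard_0$ denote the number of realizations of $p$ in $N$, a cardinal that is nonzero. By part (2) every $M$ containing $A$ has either $0$ or exactly $\mucard_0$ realizations of $p$, so $\mucard_0$ bounds the number of realizations in every model. Since $p$ is $\mucard$-algebraic we get $\mucard_0 < \mucard$, and since every model then has at most $\mucard_0 < \mucard_0^+$ realizations, $p$ is $\mucard_0^+$-algebraic. I expect the main obstacle to be the bookkeeping in part (1): the isomorphism coming from equality of types a priori only relates $\cl^M(A\bb)$ with $\cl^N(A\bc)$, and the real work is to see that $\bc \in \cl^N(A)$ collapses the target to $\cl^N(A)$ and that Fact \ref{inter-basics}(1) then pins $f$ down to an isomorphism of the closures of $A$, thereby swallowing $\bb$ into $\cl^M(A)$; once this identity is in hand, parts (2) and the final clause are essentially formal.
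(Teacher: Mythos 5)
Your argument is correct and follows essentially the same route as the paper's proof: use equality of types to get an isomorphism between the two closures, note that $\bc \in \cl^N(A)$ collapses $\cl^N(A\bc)$ to $\cl^N(A)$, and apply Fact \ref{inter-basics}(1) to force $\cl^M(A\bb) = \cl^M(A)$, after which part (2) and the final clause follow from the resulting bijection on realizations. The only (immaterial) differences are that you orient the isomorphism in the opposite direction and conclude via injectivity rather than surjectivity, and that you spell out the ``in particular'' clause, which the paper leaves implicit.
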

\begin{proof}
  Write $p = \gtp (\ba / A; N)$. For the first item, if $\bb$ realizes $p$, then there is an isomorphism $f: \cl^N(A\ba) \rightarrow \cl^M(A\bb)$ fixing $A$ and such that $f(\ba)=\bb$.  But $\cl^N(A\ba) = \cl^N(A)$ by assumption, so $\cl^M(A\bb) = f[\cl^N(A)] = \cl^M (f[A]) = \cl^M (A)$ and so $\bb \in \cl^M(A)$.

  For the second item, suppose that $\bb \in M$ realizes $p$. Then $\bb \in \cl^M(A)$ by the first item and there exists an isomorphism $f: \cl^N (A) \rightarrow \cl^M (A)$ sending $\ba$ to $\bb$ and fixing $A$. Without loss of generality, $N = \cl^N (A)$ and $M = \cl^M (A)$. Now as $f$ is a bijection, it must send distinct realizations of $p$ in $N$ to distinct realizations of $p$ in $M$ and vice-versa. The result follows.
\end{proof}

Using algebraicity, we can give the desired semantic definition of multiuniversal classes:

\begin{definition} \label{multiuniv-def}
  We say that an AEC $\K$ is \defn{multiuniversal} if it admits intersections and for any $M \in \K$, any $A \subseteq M$, and any $b \in \cl^M(A)$, $\gtp(b / A; M)$ is algebraic. 
\end{definition}

\begin{remark}\label{eta-multi-rmk}
  More generally, define an AEC with intersections to be \emph{$\mucard$-multiuniversal} if for any $M \in \K$, any $A \subseteq M$, and any $b \in \cl^M (A)$, $\gtp (b / A; M)$ is $\eta$-algebraic. Except in some remarks, this more general notion will not be used in the rest of the paper, but note that:

  \begin{enumerate}
  \item Any universal class is a $2$-multiuniversal AEC. Conversely, a canonical expansion of the vocabulary of a $2$-multiuniversal AEC makes it into a universal class. See \cite[Theorem 2.8]{multipres-pams} for a precise statement.
  \item A multiuniversal class is exactly an $\aleph_0$-multiuniversal class.
  \item Any AEC $\K$ with intersections is $\LS (\K)^+$-multiuniversal. Indeed, if $b \in \cl^M (A)$ then by finite character (Fact \ref{inter-basics}) there exists a finite $A_0 \subseteq A$ with $b \in \cl^M (A_0)$, and the latter structure has cardinality at most $\LS (\K)$.
  \end{enumerate}
\end{remark}

\begin{example}\label{mu-example} \
  \begin{enumerate}
  \item Any universal class is a multiuniversal AEC.
  \item Let $T$ be a first-order theory with quantifier elimination. Let $\K$ be the class of all algebraically closed subsets of models of $T$, ordered by being a substructure. Then $\K$ is a multiuniversal AEC.
  \item Any AEC with intersections in which the closure operator is \emph{locally finite} (i.e.\ the closure of any finite set is finite) is a multiuniversal AEC, see \cite{locally-finite-aec-jsl} for a discussion of locally finite AECs.
  \item The AEC $\K$ of algebraically closed fields (ordered by subfield) is a multiuniversal AEC which is not a universal class. 

  \item\label{mu-example-6} Let $K$ be the class of pairs $(A, E)$, where $E$ is an equivalence relation on $A$, each of whose classes are countably infinite. Order it by the relation ``equivalence classes do not grow''. The resulting AEC $\K$ is sometimes called the \emph{toy quasiminimal class}. It is easy to check that $\K$ has intersections but is not $\aleph_0$-multiuniversal.
  \item Let $\K$ be the class of algebraically closed valued fields of rank one (that is, the valuation embeds into a subgroup of the reals). We code this by adding a constant symbol for every real number in the vocabulary. Then $\K$ is a multiuniversal AEC. Moreover, $\K$ is not axiomatizable by a first-order theory.

  \item The FCA classes introduced in \cite{aec-field-aut-v1} are AECs of relatively algebraically closed fields with commuting automorphisms. They are multiuniversal, but generally not first-order axiomatizable.
  \item Recall that a graph is \emph{locally finite} if all its vertices have finite degree. Let $K$ be the class of locally finite graphs, and make it into an AEC $\K$ by ordering it with $G \lea H$ if and only if $G$ is a subgraph of $H$ and if $v$ is in $H$ and there is an edge from $v$ to $G$, then $v \in G$ (that is, any connected component of $G$ in $H$ is $G$). Note that $\K$ admits intersections, is not first-order-axiomatizable, and is multiuniversal. Indeed, let $M \in \K$, $A \subseteq M$, and $b \in \cl^M (A)$. Let $A_0 \subseteq A$ be finite such that $b \in \cl^M (A_0)$ (Fact \ref{inter-basics}). It suffices to see that $p := \gtp (b / A_0; M)$ is $\aleph_0$-algebraic. Note that $b$ is connected to $A_0$ and the length $n$ of the smallest path from $A_0$ to $b$ is part of the information carried by the type of $b$ over $A$. Now since $M$ is locally finite, there are finitely-many vertices in $M$ at distance at most $n$ from $A_0$, and only those could realize $p$, so $p$ is indeed $\aleph_0$-algebraic.
  \item Let $\tau$ consist of unary predicates $P$ and $Q$, of a binary relation $E$, and of binary relations $\seq{R_m : m \in [2, \omega)}$. We let $K$ be the class of $\tau$-structures $M$ such that:

    \begin{enumerate}
    \item $P^M \cup Q^M = M$, $P^M \cap Q^M = \emptyset$, $E^M \subseteq P^M \times Q^M$. We think of $Q^M$ as a set of subsets of $P^M$ and of $E^M$ as being the membership relation.
    \item For each $s \in Q^M$, there exists at least one but only finitely-many $x \in P^M$ such that $x E^M s$. We let $n (s)$ be $|\{x \in P^M \mid x E^M s\}|$. Intuitively, $Q^M$ consists of finite non-empty subsets of $P^M$, and $n(s)$ denotes the cardinality of the set coded by $s$.
    \item If $x E^M s$ and $x E^M s'$, then $s = s'$. That is, there is at most one set containing each element.
    \item For each $x \in P^M$, there exists $s \in Q^M$ such that $x E^M s$. That is, each element is contained in at least one set. This and the previous axioms imply that $Q^M$ codes a partition of $P^M$ consisting of finite sets.
    \item For each $m \in [2, \omega)$, $R_{m}$ is the graph of a bijection from $\{s \in Q^M \mid n (s) = 1\}$ onto $\{s \in Q^M \mid n (s) = m\}$. Thus $R_{m}$ witnesses that in the partition there are as many sets with $m$ elements as with one element.
    \end{enumerate}

    We make $K$ into an AEC $\K$ by ordering it by $M \lea N$ if and only if $M \subseteq N$, and the sets do not grow: $x E^N s$ implies that $x \in N \backslash M$ if and only if $s \in N \backslash M$ (in particular the value of $n(s)$ is the same in $N$ and $M$). Then $\K$ has intersections and $\K$ is a multiuniversal AEC (because, in a nutshell, $\cl^M (A)$ adds only one set of each cardinality, and each such set has finitely-many elements). Moreover $\K$ is categorical in every infinite cardinal and is not first-order axiomatizable. Note also that $\K$ is not a universal class (because for a fixed $s$ the elements $x$ such that $x E s$ all have the same type).
    
  \item Let $\K$ be a universal class of abelian groups such that each group has finitely many $n$-torsion elements for every $n < \omega$ (and non-trivial $n$-torsion for at least one $n$). For example, the class of groups of the form $G \times \mathbb{Z}_n$, where $G$ is abelian torsion-free and $\mathbb{Z}_n$ is the cyclic group of order $n$ (coded by constant symbols in the language).

    Let $\K^{div}$ be the class consisting of the injective envelopes (generated divisible extensions) of groups in $\K$.  Then $\K^{div}$ is not a universal class because there is no function that takes an element $g$ to its divisors.  However, the number of values for $\frac{g}{n}$ is precisely equal to the amount of $n$-torsion in the group.  Thus, $\K^{div}$ is a multiuniversal class.
  
  If the original class $\K$ instead had $n$-torsion that was bounded above by $\mucard$, then the resulting class $\K$ would be $\mucard^+$-multiuniversal.
  \end{enumerate}
\end{example}

\section{Some model theory of multiuniversal AECs}

\subsection{Tameness and shortness in multiuniversal AECs}

It is natural to ask when a Galois type is determined by its restriction to small subsets. One can also ask when a type of a sequence is determined by its restriction to small subsequences. In the literature, the former property is called \emph{tameness} \cite{tamenessone} and the latter is called \emph{type-shortness} \cite[Definition 3.3]{tamelc-jsl}. We amalgamate these two properties into one definition here:

\begin{defin}
  For $\kappa$ an infinite cardinal, an AEC $\K$ is \emph{$(<\kappa)$-short} if for any $N_1, N_2 \in \K$ and any ordinal $\alpha$, if for $\ell = 1,2$, $\ba_\ell$ are sequences of length $\alpha$ from $N_\ell$ such that $\gtp (\ba_1 \rest I / \emptyset; N_1) = \gtp (\ba_2 \rest I / \emptyset; N_2)$ for all $I \subseteq \alpha$ of size strictly less than $\kappa$, then $\gtp (\ba_1 / \emptyset; N_1) = \gtp (\ba_2 / \emptyset; N_2)$.
\end{defin}
\begin{remark}\label{tameness-rmk}
  If $\K$ is $(<\kappa)$-short, then \cite[Theorem 3.5]{tamelc-jsl} $\K$ is fully $(<\kappa)$-tame and short over every set. That is, for any $N_1, N_2 \in \K$, any $A \subseteq N_1 \cap N_2$, and any ordinal $\alpha$, if for $\ell = 1,2$, $\ba_\ell$ are sequences of length $\alpha$ from $N_\ell$ such that $\gtp (\ba_1 \rest I / A_0; N_1) = \gtp (\ba_2 \rest I / A_0; N_2)$ for all $I \subseteq \alpha$ and all $A_0 \subseteq A$ both of size strictly less than $\kappa$, then $\gtp (\ba_1 / A; N_1) = \gtp (\ba_2 / A; N_2)$.
\end{remark}

The second author has shown that, for any AEC $\K$, if $\kappa > \LS (\K)$ is a strongly compact cardinal, then $\K$ is $(<\kappa)$-short \cite[Theorem 4.5]{tamelc-jsl}. Further, one can show in ZFC that universal classes are $(<\aleph_0)$-short (this appears as \cite[Theorem 3.7]{ap-universal-apal} but is due to the second author). We generalizes this result to multiuniversal classes here. The argument is more complex because an element $b \in \cl^M(A)$ cannot necessarily be written as a term with parameters from $A$. Instead, given $\ba_1$ and $\ba_2$ that locally have the same type, we may have several choices in how to put together the partial mappings witnessing this.  As we attempt to put these maps together, we might revise previous choices, but because of the algebraicity of types, the revision does not happen too many times. This rough intuition is formalized using Tychonoff's theorem.

\begin{thm}\label{tameness-thm}
  Any multiuniversal AEC is $(<\aleph_0)$-short.
\end{thm}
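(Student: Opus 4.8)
The plan is to reduce everything to the construction of a single isomorphism and then produce that isomorphism by a compactness (Tychonoff) argument over finite candidate sets. Fix $N_1, N_2$, an ordinal $\alpha$, and $\ba_1, \ba_2$ of length $\alpha$ with $\gtp (\ba_1 \rest I / \emptyset; N_1) = \gtp (\ba_2 \rest I / \emptyset; N_2)$ for every finite $I \subseteq \alpha$. By the equality-of-types criterion (Fact \ref{inter-basics}(3)) we may assume $N_\ell = \cl^{N_\ell}(\ba_\ell) =: M_\ell$, and it suffices to build an isomorphism $g \colon M_1 \to M_2$ with $g((\ba_1)_i) = (\ba_2)_i$ for all $i < \alpha$. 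By finite character of the closure (Fact \ref{inter-basics}(2)), $M_\ell = \bigcup \{\cl^{N_\ell}(\ba_\ell \rest I) : I \subseteq \alpha \text{ finite}\}$, a directed union. For each finite $I$, the hypothesis together with Fact \ref{inter-basics}(3) gives a \emph{nonempty} set $\mathcal{F}_I$ of isomorphisms $\cl^{N_1}(\ba_1 \rest I) \to \cl^{N_2}(\ba_2 \rest I)$ sending $\ba_1 \rest I$ to $\ba_2 \rest I$. The crucial difference with the universal case is that $\mathcal{F}_I$ need not be a singleton, so there is no canonical map to take a direct limit of; this is exactly the obstacle, and it is where multiuniversality (rather than mere intersections) must enter.

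The key finiteness input is this. For finite $I$ and $c \in \cl^{N_1}(\ba_1 \rest I)$, set
\[
R_I(c) := \{ d \in N_2 : \gtp (c\,(\ba_1 \rest I) / \emptyset; N_1) = \gtp (d\,(\ba_2 \rest I) / \emptyset; N_2) \}.
\]
Since $c \in \cl^{N_1}(\ba_1 \rest I)$, multiuniversality makes $\gtp(c / \ba_1 \rest I; N_1)$ algebraic, and using Lemma \ref{intersec-alg} (any two elements of $R_I(c)$ are related by an automorphism of $\cl^{N_2}(\ba_2 \rest I)$ fixing $\ba_2 \rest I$) this forces $R_I(c)$ to be finite; it is nonempty because any $f \in \mathcal{F}_I$ gives $f(c) \in R_I(c)$. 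I would then form the space $Y = \prod_{I, c} R_I(c)$, the product over all finite $I \subseteq \alpha$ and all $c \in \cl^{N_1}(\ba_1 \rest I)$, each factor finite and discrete. By Tychonoff's theorem $Y$ is compact, and a point $\sigma \in Y$ records, layer by layer, a choice of image $\sigma_I(c) := \sigma(I,c) \in R_I(c)$.

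Next I would carve out the closed set $C \subseteq Y$ of coherent points: those $\sigma$ such that (i) each layer $\sigma_I$ preserves the Galois type of every finite tuple and is injective, and (ii) $\sigma_J$ extends $\sigma_I$ whenever $I \subseteq J$. Each clause depends on only finitely many coordinates (a finite-tuple Galois type depends on the finitely many corresponding values, and injectivity and compatibility are conditions on pairs), so $C$ is closed. For the finite intersection property: any finitely many clauses mention finitely many index sets; letting $I^*$ be their union and choosing $f^* \in \mathcal{F}_{I^*}$, the restrictions $f^* \rest \cl^{N_1}(\ba_1 \rest I)$ lie in $\mathcal{F}_I$ for every $I \subseteq I^*$, so setting $\sigma(I,c) = f^*(c)$ for such $(I,c)$ (and arbitrarily otherwise, using $R_I(c) \neq \emptyset$) meets all the chosen clauses. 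Hence $C$ is a nonempty intersection of a family of closed sets with the finite intersection property. Fixing $\sigma \in C$ and defining $g(c) := \sigma_I(c)$ for any witnessing $I$ (well defined by (ii)), Fact \ref{inter-basics}(1) upgrades each type-preserving injective layer $\sigma_I$ to a genuine isomorphism of $\cl^{N_1}(\ba_1 \rest I)$ \emph{onto} $\cl^{N_2}(\ba_2 \rest I)$ (the image is forced to be the closure of the image, so surjectivity need not be imposed—which is fortunate, as surjectivity is not a closed condition). Gluing these layers over the directed union yields the desired isomorphism $g \colon M_1 \to M_2$ sending $\ba_1$ to $\ba_2$, and we conclude by Fact \ref{inter-basics}(3).

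The step I expect to be the main obstacle is the heart of the argument rather than the bookkeeping: because extensions are no longer unique, one cannot simply amalgamate the maps $f_I$ directly, and it is precisely algebraicity (via multiuniversality and Lemma \ref{intersec-alg}) that makes every candidate set $R_I(c)$ finite, so that Tychonoff compactness can coherently reconcile the competing local choices—the ``finite-injury'' phenomenon alluded to above. A secondary delicate point, which I would handle with Fact \ref{inter-basics}(1), is ensuring that the closed conditions really capture $\K$-isomorphisms onto the correct closures and not merely partial $\tau(\K)$-embeddings.
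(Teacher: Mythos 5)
Your proposal is correct and takes essentially the same route as the paper: reduce to building an isomorphism $\cl^{N_1}(\ba_1)\to\cl^{N_2}(\ba_2)$, use multiuniversality to make each local candidate set finite, and reconcile the choices by Tychonoff plus the finite intersection property (the paper packages the product over finite partial mappings $F_B$ for $B$ in a suitable family of finite sets, rather than over the individual image sets $R_I(c)$ with coherence carved out as a closed set, but this is only a repackaging). One justification in your writeup does not hold up as cited: Fact \ref{inter-basics}(1) applies to $\K$-embeddings, and knowing that a layer $\sigma_I$ (an injective, finite-type-preserving map on the possibly infinite set $\cl^{N_1}(\ba_1\rest I)$) is a $\K$-embedding is essentially the statement being proved, so invoking it to get surjectivity onto $\cl^{N_2}(\ba_2\rest I)$ is circular. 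The surjectivity is nevertheless true and easy: each $\sigma_I(c)$ lands in $\cl^{N_2}(\ba_2\rest I)$ by Lemma \ref{intersec-alg}(1), and for any $d\in\cl^{N_2}(\ba_2\rest I)$ the (algebraic) type of $d$ over $\ba_2\rest I$ and the corresponding type over $\ba_1\rest I$ have the same finite number of realizations by Lemma \ref{intersec-alg}(2), so the injection $\sigma_I$ between these finite realization sets is onto by pigeonhole; this is also the unstated content of the paper's ``straightforward to check'' at the end of its proof.
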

\begin{proof}
  Let $N_1, N_2 \in \K$, let $\ba_\ell$ be $\alpha$-sequences of elements from $N_\ell$, $\ell = 1,2$. Let $p_\ell := \gtp (\ba_\ell / \emptyset; N_\ell)$. Assume that $p_1^{I} = p_2^I$ for all finite $I \subseteq \alpha$ (here, $p_\ell^I$ denotes $\gtp (\ba_\ell \rest I / \emptyset; N_\ell$)). We have to show that $p_1 = p_2$. We will show that there is an isomorphism $f: \cl^{N_1} (\ba_1) \rightarrow \cl^{N_2} (\ba_2)$ so that $f (\ba_1) = \ba_2$. Let $M_\ell := \cl^{N_\ell} (\ba_\ell)$. For $\ell = 1,2$, let $A_\ell := \ran{\ba_\ell}$, and let $f_0 : A_1 \rightarrow A_2$ be the function sending $\ba_1$ to $\ba_2$.

  Call a partial function $f$ from $M_1$ to $M_2$ an \defn{$(M_1, M_2)$-mapping} if for some (any) enumeration $\bb$ of the domain of $f$, $\gtp (\bb / \emptyset; M_1) = \gtp (f (\bb) / \emptyset; M_2)$. Set
  \[
  P = \{B \subseteq M_1 \mid B \text{ is finite and } B \subseteq \cl^{M_1}(B \cap A_1)\}
  \]
  For each $B \in P$, let $F_B$ be the collection of $(M_1, M_2)$-mappings with domain $B$ that agree with $f_0$. By the assumption that $p_1^I = p_2^I$ for all finite $I$ and Fact \ref{inter-basics}, $F_B$ is not empty. By multiuniversality of $\K$, $F_B$ is finite, hence compact under the discrete topology. Thus, by Tychonoff's Theorem, $F := \prod_{B \in P} F_B$ is compact when endowed with the product topology. 
  
Now for each $B \in P$, let $N_B$ be the set of sequences $\seq{f_{B_0} : B_0 \in P}$ from $F$ that agree up to $B$: $f_{B_0} \subseteq f_B$ for all $B_0 \subseteq B$ in $P$. The set $N_B$ is not empty and is closed. Further, the collection $\{N_B : B \in P\}$ has the finite intersection property:  for $B_0, B_1, \ldots, B_{n - 1} \in P$, we have that $N_B \subseteq \bigcap_{i < n} N_{B_i}$ for any $B \in P$ extending $\bigcup_{i < n} B_i$. By the compactness of $F$, there is $\bar{f} = \seq{f_B : B \in P} \in \bigcap_{B \in P} N_B$. It is then straightforward to check that $\bigcup_{B\in P} f_B$ is an isomorphism from $M_1$ to $M_2$ that sends $\ba_1$ to $\ba_2$.
\end{proof}
\begin{remark}
  Theorem \ref{tameness-thm} does \emph{not} generalize to $\mucard$-multiuniversal classes: by Remark \ref{eta-multi-rmk}, any AEC $\K$ with intersection is $\LS (\K)^+$-universal, but there are numerous examples of non-tame AECs with intersections (see e.g.\ \cite{non-locality, lc-tame-pams}).
\end{remark}

\subsection{Abstract Morleyizations and compactness}

In \cite[Definition 3.3]{sv-infinitary-stability-afml}, the third author introduced the \emph{Galois Morleyization} of an AEC. It is an expansion of the vocabulary that adds predicates for each Galois types over the empty set (with length below a fixed bound).  Following this, we say that an AEC $\K$ is \defn{$(<\aleph_0)$-Morleyized} if for every $p \in \gS^{<\omega}_\K(\emptyset)$, there is a relation $R_p \in \tau(\K)$ (of the same arity as $p$) such that, for each $M \in \K$, $R_p(M) = p(M)$ (that is, $R_p$ is realized exactly by the elements realizing $p$ in $M$). By \cite[Proposition 3.5]{sv-infinitary-stability-afml}, each AEC has a functorial expansion (see \cite[Definition 3.1]{sv-infinitary-stability-afml}) to a $(<\aleph_0)$-Morleyized AEC. By \cite[Theorem 3.16]{sv-infinitary-stability-afml}, $\K$ is $(<\aleph_0)$-short if and only if Galois types are quantifier-free first-order types in the $(<\aleph_0)$-Galois Morleyization of $\K$. We give this conclusion a name:

\begin{defin}\label{syntactic-def}
  We say that an AEC $\K$ \emph{has quantifier-free types} if for any $N_1, N_2 \in \K$, $A \subseteq N_1 \cap N_2$ and (possibly infinite) sequences $\ba_\ell \in N_\ell$, $\ell = 1,2$, we have $\gtp (\ba_1 / A; N_1) = \gtp (\ba_2 / A; N_2)$ if and only if $\tpqf (\ba_1 / A; N_1) = \tpqf (\ba_2 / A; N_2)$. Here, $\tpqf (\ba / A; N)$ denotes the first-order quantifier-free type of $\ba$ over $A$ as computed in $N$.
\end{defin}

The discussion in the previous paragraph showed:

\begin{fact}\label{syntactic-fact}
  Any $(<\aleph_0)$-short $(<\aleph_0)$-Morleyized AEC has quantifier-free types.
\end{fact}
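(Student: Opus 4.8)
The plan is to prove the two implications of Definition \ref{syntactic-def} separately, observing that only the direction ``quantifier-free types determine Galois types'' uses the hypotheses. At bottom this is a repackaging of \cite[Theorem 3.16]{sv-infinitary-stability-afml}: since $\K$ is already $(<\aleph_0)$-Morleyized, the predicates one would otherwise have to add in the Galois Morleyization already live in $\tau (\K)$, so the quantifier-free types of $\K$ itself are fine enough to compute Galois types.

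First I would dispose of the easy direction: if $\gtp (\ba_1 / A; N_1) = \gtp (\ba_2 / A; N_2)$ then $\tpqf (\ba_1 / A; N_1) = \tpqf (\ba_2 / A; N_2)$. This holds in \emph{any} AEC and uses neither shortness nor Morleyization. Unwinding the definition of Galois type equality, there is (possibly through a chain of amalgams) a model $N \in \K$ together with $\K$-embeddings $f_\ell \colon N_\ell \to N$ satisfying $f_1 \rest A = f_2 \rest A$ and $f_1 (\ba_1) = f_2 (\ba_2)$. Since $\K$-embeddings are $\tau (\K)$-embeddings, they preserve and reflect quantifier-free formulas, so the quantifier-free type of $\ba_\ell$ over $A$ computed in $N_\ell$ coincides with that of $f_\ell (\ba_\ell)$ over $f_\ell [A]$ computed in $N$; as the latter agree for $\ell = 1, 2$, so do the former.

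The substance is the converse, so assume $\tpqf (\ba_1 / A; N_1) = \tpqf (\ba_2 / A; N_2)$. By Remark \ref{tameness-rmk}, $\K$ is fully $(<\aleph_0)$-tame and short over every set, so it suffices to prove $\gtp (\ba_1 \rest I / A_0; N_1) = \gtp (\ba_2 \rest I / A_0; N_2)$ for every finite $I \subseteq \alpha$ and every finite $A_0 \subseteq A$. Fix such $I$ and $A_0$, let $\bd$ enumerate $A_0$ (a tuple in $N_1 \cap N_2$), and let $\bc_\ell$ be the finite concatenation $(\ba_\ell \rest I) {}^\frown \bd \in N_\ell$. Because quantifier-free formulas have finite character and absorbing the parameters $A_0$ into the tuple is a purely syntactic reindexing, the hypothesis gives $\tpqf (\bc_1 / \emptyset; N_1) = \tpqf (\bc_2 / \emptyset; N_2)$. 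Now set $p := \gtp (\bc_1 / \emptyset; N_1) \in \gS^{<\omega}_\K (\emptyset)$. Since $\K$ is $(<\aleph_0)$-Morleyized there is $R_p \in \tau (\K)$ with $R_p (M) = p (M)$ for all $M \in \K$; in particular $N_1 \models R_p (\bc_1)$. The atomic formula $R_p (\bx)$ is quantifier-free, hence lies in $\tpqf (\bc_1 / \emptyset; N_1) = \tpqf (\bc_2 / \emptyset; N_2)$, so $N_2 \models R_p (\bc_2)$, i.e.\ $\bc_2$ realizes $p$ in $N_2$ and $\gtp (\bc_2 / \emptyset; N_2) = p$. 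Reading off the first $|I|$ coordinates over the common parameters enumerated by $\bd$ then yields $\gtp (\ba_1 \rest I / A_0; N_1) = \gtp (\ba_2 \rest I / A_0; N_2)$, completing the converse.

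I expect the only genuine obstacle to be this reduction in the converse: invoking Remark \ref{tameness-rmk} to descend from possibly infinite $\ba_\ell$ over an arbitrary $A$ to finite subtuples over finite parameter sets, and then the bookkeeping that converts a type over the finite set $A_0$ into a type over $\emptyset$ by appending an enumeration of $A_0$ (and, conversely, recovers the type over $A_0$ from the type of the longer tuple over $\emptyset$, using that $A_0 \subseteq N_1 \cap N_2$ is enumerated identically on both sides). Once that scaffolding is in place, the use of Morleyization is immediate, since $R_p$ is atomic and therefore automatically belongs to every quantifier-free type realizing $p$.
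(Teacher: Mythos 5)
Your proof is correct and follows essentially the same route as the paper, which simply cites \cite[Theorem 3.16]{sv-infinitary-stability-afml}: the easy direction is preservation of quantifier-free formulas under $\K$-embeddings, and the substantive direction reduces to finite tuples over finite parameter sets via $(<\aleph_0)$-shortness (Remark \ref{tameness-rmk}) and then reads off the Galois type from the atomic predicate $R_p$ supplied by the Morleyization. The only step you should be aware is being absorbed into the cited machinery is the bookkeeping identifying $\gtp(\ba/A_0;N)$ with $\gtp(\ba\bd/\emptyset;N)$ in both directions, which in an AEC without amalgamation requires renormalizing the intermediate models in the chain of atomic equivalences so that $\bd$ is held fixed; this is standard and handled in \cite[\S 2]{sv-infinitary-stability-afml}.
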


We now prove a compactness theorem for AECs with quantifier-free types. First, some terminology:

\begin{definition}
  Let $\K$ be an AEC and let $p (\bx)$ be a set of quantifier-free first-order formulas in $\tau (\K)$.

  \begin{enumerate}
  \item We say that $p$ is \defn{$\K$-satisfiable} if there is $M \in \K$ and $\ba \in M$ such that $M \models \phi[\ba]$ for all $\phi \in p$.
  \item We say that $p$ is \defn{strongly finitely $\K$-satisfiable} if for every finite subsequence $\bx_0$ of $\bx$, $p \rest \bx_0$ is $\K$-satisfiable, where $p \rest \bx_0$ denotes the set of formulas in $p$ whose free variables are all in $\bx_0$.
  \end{enumerate}

\end{definition}

\begin{thm}[Compactness for AECs with quantifier-free types]\label{compactness-thm}
  Let $\K$ be an AEC with quantifier-free types and intersections. Let $p (\bx)$ be a complete set of quantifier-free first-order formulas in $\tau (\K)$. If $p$ is strongly finitely $\K$-satisfiable, then $p$ is $\K$-satisfiable.
\end{thm}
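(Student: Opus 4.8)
The natural tool here is a compactness argument through ultraproducts, and the plan breaks into three moves: manufacture the finite pieces, realize $p$ in an ultraproduct, and then carve a genuine member of $\K$ out of that ultraproduct.

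First I would use strong finite $\K$-satisfiability to produce the finite pieces. Let $\alpha$ be the length of $\bx$ and let $D$ be the set of finite subsets of $\alpha$. For each $I \in D$, choose $M_I \in \K$ and $\bc_I \in M_I$ realizing $p \rest \bx_I$, and, using intersections, replace $M_I$ by $\cl^{M_I}(\bc_I)$ so that $M_I = \cl^{M_I}(\bc_I)$. Since $p$ is complete and $\K$ has quantifier-free types, each $p \rest \bx_I$ is (the quantifier-free presentation of) a complete Galois type, and for $I \subseteq J$ the restriction $\bc_J \rest I$ again realizes $p \rest \bx_I$. Next I would realize $p$ in an ultraproduct: let $U$ be an ultrafilter on $D$ extending the filter generated by the cones $\{J \in D : J \supseteq I\}$ (these have the finite intersection property, as $D$ is directed under $\subseteq$), form $M^* = \prod_{I \in D} M_I / U$, and set $a_i^* = [\langle \bc_I(i) : I \ni i \rangle]_U$. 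By \L o\'s's theorem, for each quantifier-free $\phi \in p$ with free variables among $x_{i_1}, \dots, x_{i_k}$ the set of $I \supseteq \{i_1, \dots, i_k\}$ lies in $U$ and witnesses $M_I \models \phi[\bc_I]$, so $M^* \models \phi[\ba^*]$; thus $\ba^*$ realizes $p$ in $M^*$.

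The difficulty is that $M^*$ need not lie in $\K$, since an AEC is in general not closed under ultraproducts. The final and hardest step is therefore to extract a genuine member of $\K$ by carving out the closure of $\ba^*$ inside $M^*$. Concretely, for each finite $I_0 \subseteq \alpha$ let $N_{I_0}$ be the set of those $[\langle b^I \rangle]_U \in M^*$ with $b^I \in \cl^{M_I}(\bc_I \rest I_0)$ for $U$-almost all $I$. These are $\tau(\K)$-substructures of $M^*$, they increase with $I_0$, their union $N$ contains $\ba^*$, and quantifier-free formulas are absolute between $N$ and $M^*$, so $\ba^*$ still realizes $p$ in $N$. The crux is to show $N \in \K$, and the decisive point is that every closure-type over $\bc_{I_0}$ is algebraic. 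This is exactly where quantifier-free types together with intersections are used: a non-algebraic closure-type would, via Fact \ref{inter-basics} and Lemma \ref{intersec-alg}, produce sequences with equal quantifier-free but distinct Galois types (the failure exhibited by the toy quasiminimal class of Example \ref{mu-example}), contradicting the hypothesis. Given algebraicity, each closure-type has only finitely many realizations in every $M_I$, and since a finite set is unchanged by ultrapowers, the ultrafilter preserves this multiplicity; hence $N_{I_0}$ does not spread out into a full ultrapower but is isomorphic to $M_{I_0}$ by Fact \ref{inter-basics}.

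Finally, the inclusions $N_{I_0} \subseteq N_{J_0}$ transport, under these isomorphisms, to $\cl^{M_{J_0}}(\bc_{J_0} \rest I_0) \lea M_{J_0}$, which are $\K$-embeddings by intersections; so $N$ is a directed colimit of members of $\K$ along $\K$-embeddings and is therefore itself in $\K$, completing the proof. I expect the main obstacle to be precisely controlling the closure of the standard tuple $\ba^*$ in $M^*$ — showing it remains small rather than blowing up into an ultrapower — and I expect algebraicity of closure-types (forced by quantifier-free types and intersections) to be the ingredient that makes this control possible.
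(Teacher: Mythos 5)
Your first two moves (building the finite pieces and realizing $p$ in an ultraproduct via \L o\'s) are fine, but the third step --- extracting a member of $\K$ from the ultraproduct --- rests on a false lemma, and that step carries all the weight. You claim that ``quantifier-free types plus intersections'' force every closure-type over a finite set to be algebraic. They do not: the theorem is \emph{not} stated for multiuniversal classes, and the toy quasiminimal class of Example \ref{mu-example}(\ref{mu-example-6}) is an AEC with intersections and quantifier-free types (it satisfies all the hypotheses of Theorem \ref{main-thm}, as noted in the remark following Corollary \ref{main-cor}) in which the type of an element over another element of its equivalence class has exactly $\aleph_0$ realizations, hence is not algebraic. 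For that class your $N_{I_0}$ is a $U$-ultraproduct of countably infinite equivalence classes, so its classes have size continuum and $N_{I_0}$ is not in $\K$ at all. Even adding multiuniversality as a hypothesis, the claimed isomorphism of $N_{I_0}$ with $M_{I_0}$ fails: algebraicity bounds each \emph{individual} type, but $\cl^{M_I}(\bc_I \rest I_0)$ is a union of infinitely many algebraic types and is typically infinite, so its ultraproduct acquires new elements (in algebraically closed fields, an ultraproduct of copies of $\overline{\mathbb{Q}}$ contains transcendentals, built from algebraic numbers of unbounded degree). The ``finite sets are unchanged by ultrapowers'' principle controls a single algebraic type but not the closure, and so your $N$ need not be a directed colimit of members of $\K$.

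The paper's proof avoids ultraproducts entirely and runs the ``local AECs are compact'' argument: induct on the length $\theta$ of $\bx$; use completeness, strong finite satisfiability and quantifier-free types to show that each restriction $p^I$ with $|I| < \theta$ determines a \emph{unique} Galois type $q_I$; then build an increasing continuous chain of models realizing the types $q_i$, amalgamating at successor steps via the uniqueness of $q_j$ together with Fact \ref{inter-basics}, using shortness at limits, and reading off the realization of $p$ in the colimit. If you want to salvage an ultraproduct proof you would need a genuinely different device for re-entering $\K$; as written the argument does not close.
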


Note that the essential flavor of this proof is the argument that local AECs are compact, see \cite[Lemma 11.5]{baldwinbook09}.

\begin{proof}
  Assume $\bx = (x_i)_{i < \theta}$ for $\theta$ a cardinal. We work by induction on $\theta$: assume without loss of generality that $\theta$ is infinite and we have the result for all $\theta_0 < \theta$. For $I \subseteq \theta$, write $p^I$ for the restriction of $p$ to formulas only using variables in $\{x_i \mid i \in I\}$, and $\gS^I (\emptyset)$ for the corresponding set of $I$-indexed Galois types. 

  \underline{Claim}: For each $I \subseteq \theta$ with $|I| < \theta$, there is a unique Galois type $q_I \in \gS^I (\emptyset)$ such that for any $M \in \K$ and $\bb \in M$, $\gtp (\bb / \emptyset; M) = q_I$ if and only if $M \models \phi[\bb]$ for all formulas $\phi \in p^I$.

  \underline{Proof of Claim}: When $I$ is finite, this holds by completeness and strong finite consistency, using that every Galois type is represented by a formula. When $I$ is infinite, use the induction hypothesis for existence and the equivalence between syntactic and Galois types for uniqueness. $\dagger_{\text{Claim}}$

  Now we build $\seq{M_i : i < \theta}$, $\seq{f_{i, j} : i \le j < \theta}$, and $\seq{\bb_i : i < \theta}$ such that for each $i \le j \le k < \theta$:

  \begin{enumerate}
  \item $M_i \in \K$, $f_{i, j} : M_i \rightarrow M_j$ is a $\K$-embedding, $\bb_i$ is an $i$-indexed sequence from $M_i$, $f_{i,j} (\bb_i) = \bb_j \rest i$.
  \item $f_{j,k} \circ f_{i, j} = f_{i, k}$, and $f_{i,i}$ is the identity.
  \item $\gtp (\bb_i / \emptyset; M_i) = q_i$.
  \end{enumerate}

  This is possible: we proceed inductively. When $i = 0$, this is easy. When $i = j + 1$ is a successor, we use the uniqueness of $q_j$ together with the characterization of equality of types in AECs with intersections (Fact \ref{inter-basics}). When $i$ is limit, we take colimits and use shortness. At the end, the colimit of the system gives the desired type.
\end{proof}
\begin{remark}
  The result also holds in case the AEC has amalgamation, or more generally if it has enough amalgamation to go through the successor steps at the end of the proof. Using the terminology of \cite[2.6]{existence-categ-v2}, $\K$ must have weak amalgamation (for long-enough types) and coherent sequences.
\end{remark}


\subsection{Model completeness}

Universal classes are examples of AECs whose $\K$-substructure relation is just substructure. Following the first-order definition, Baldwin and Kolesnikov \cite[Section 4]{bk-hs} called these AECs \emph{model complete}. Not all model complete AECs are universal classes (for example, algebraically closed fields are not universal). In fact, any time that the AEC is finitary (in the sense of Hyttinen and Kesälä \cite{finitary-aec}), we can expand the vocabulary to obtain a model complete AEC \cite[Theorem 3.15]{logic-intersection-bpas}. We will use the following special case in this paper:

\begin{lemma}\label{mc-fact}
  Any AEC with quantifier-free types is model complete.
\end{lemma}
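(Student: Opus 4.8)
The plan is to unwind the definition of model completeness and reduce it to a single application of the quantifier-free types hypothesis together with the coherence axiom. Recall that for an AEC the relation $\lea$ always refines substructure, so ``model complete'' (in the sense of Baldwin--Kolesnikov) amounts to the converse: for all $M, N \in \K$, if $M$ is a $\tau(\K)$-substructure of $N$ then $M \lea N$. This is what I would prove.

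So fix $M, N \in \K$ with $M \subseteq N$, and let $\ba$ enumerate the universe of $M$. Since quantifier-free formulas are absolute between a structure and a substructure, $\ba$ realizes the same quantifier-free first-order formulas in $M$ and in $N$; that is, $\tpqf(\ba / \emptyset; M) = \tpqf(\ba / \emptyset; N)$. Because $\K$ has quantifier-free types (Definition \ref{syntactic-def}, applied with $N_1 = M$, $N_2 = N$, $A = \emptyset$), this immediately upgrades to equality of Galois types: $\gtp(\ba / \emptyset; M) = \gtp(\ba / \emptyset; N)$.

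It remains to convert this equality into the strong substructure relation, and here the key point is that $\ba$ enumerates all of $M$. Given a single amalgam witnessing the equality --- a model $N^\ast \in \K$ with $\K$-embeddings $f \colon M \to N^\ast$ and $g \colon N \to N^\ast$ satisfying $f(\ba) = g(\ba)$ --- the fact that $\ba$ lists every element of $M$ forces $f[M] = g[\ran{\ba}] \subseteq g[N]$. Both $f[M]$ and $g[N]$ are strong submodels of $N^\ast$, so coherence yields $f[M] \lea g[N]$. Pulling back along the isomorphism $g \colon N \cong g[N]$ (which preserves $\lea$ by invariance under isomorphism) gives $g^{-1}[f[M]] \lea N$; and since $g^{-1}\circ f$ fixes $\ba$ pointwise and $\ba$ enumerates $M$, we have $g^{-1}[f[M]] = M$. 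Hence $M \lea N$, as desired.

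The main subtlety --- and the one step I would write out carefully --- is that in the absence of amalgamation the equality of Galois types is witnessed not by a single amalgam but by a finite chain of atomic equivalences. I would handle this by induction along the chain, maintaining the invariant that for each triple $(\bc, \emptyset, P)$ occurring in it there is a $\K$-embedding $j \colon M \to P$ with $j(\ba) = \bc$ (in particular $j[M] \lea P$). The base case is trivial, and each atomic step is pushed through by exactly the coherence argument above: composing the current embedding $j$ with one leg of the amalgam, the hypothesis that $\ba$ enumerates $M$ again forces the image of $M$ to sit inside the image of the next model, so coherence produces the next embedding. Applying the invariant to the final triple $(\ba, \emptyset, N)$, where the tuple is the copy of $\ba$ sitting inside $N$ via the inclusion, yields a $\K$-embedding $M \to N$ fixing $\ba$, i.e.\ the inclusion, and therefore $M \lea N$.
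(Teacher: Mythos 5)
Your proposal is correct and follows essentially the same route as the paper's proof: enumerate $M$ as $\ba$, note that $\tpqf(\ba/\emptyset;M)=\tpqf(\ba/\emptyset;N)$ since $M\subseteq N$, upgrade to equality of Galois types via the hypothesis, and conclude $M\lea N$. The paper compresses the final step into ``the definition of Galois types gives that $M \lea N$''; your careful unwinding of that step via coherence (including the chain of atomic equivalences in the absence of amalgamation) is exactly the intended argument, just written out in full.
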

\begin{proof}
  Let $\K$ be an AEC with quantifier-free types. Let $M, N \in \K$ be such that $M \subseteq N$. Let $\ba$ be an enumeration of $M$. It is enough to show that $\gtp (\ba / \emptyset; M) = \gtp (\ba / \emptyset; N)$ (then the definition of Galois types gives that $M \lea N$). But this holds because $M \subseteq N$ and quantifier-free types are the same as Galois types. 
\end{proof}

\subsection{Finitary isolation}

An interesting feature of multiuniversal classes is that types are isolated over finite subtypes:

\begin{defin}\label{isolates-def}
  Let $\K$ be an AEC and let $M \in \K$. Let $A, B \subseteq M$, let $\alpha$ be an ordinal, and let $p \in \gS^\alpha (A; M)$, $q \in \gS^\alpha (B; M)$. We say that \defn{$p$ isolates $q$ in $M$} if whenever $p = \gtp (\bb / A; M)$, we have that $q = \gtp (\bb / B; M)$.
\end{defin}

\begin{definition}\label{isolation-axiom}
  An AEC $\K$ with intersections has \defn{finitary isolation} if whenever $M \in \K$, $A \subseteq M$, and $\bb$ is a finite sequence of elements from $\cl^M (A)$, then there exists $A_0 \subseteq A$ finite such that $\gtp (\bb / A_0; M)$ isolates $\gtp (\bb / A; M)$ in $M$. 
\end{definition}
\begin{remark}
  In the terminology of \cite[5.1]{quasimin-five}, $\K$ has finitary isolation if $\gtp (\bb / A; M)$ is $s$-isolated whenever $\bb$ is a finite sequence from $\cl^M (A)$.
\end{remark}

\begin{thm}\label{isolation-thm}
  Any multiuniversal class has finitary isolation.
\end{thm}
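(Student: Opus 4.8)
The plan is to reduce finitary isolation to a finiteness-plus-tameness argument about sets of realizations. Fix $M \in \K$, $A \subseteq M$, and a finite $\bb \in \cl^M(A)$. For a finite $A_0 \subseteq A$, write $R_{A_0}$ for the set of realizations of $\gtp(\bb / A_0; M)$ in $M$, and write $R_A$ for the set of realizations of $\gtp(\bb / A; M)$ in $M$. Unwinding Definition \ref{isolates-def}, the assertion that $\gtp(\bb/A_0; M)$ isolates $\gtp(\bb/A; M)$ in $M$ is exactly the inclusion $R_{A_0} \subseteq R_A$; and since any realization of $\gtp(\bb/A; M)$ also realizes $\gtp(\bb/A_0; M)$ (Galois types are monotone under shrinking the base), we always have $R_A \subseteq R_{A_0}$. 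So it suffices to produce a finite $A_0 \subseteq A$ with $R_{A_0} = R_A$. Applying finite character (Fact \ref{inter-basics}(2)) to each coordinate of $\bb$, I would first fix a finite $A_0^* \subseteq A$ with $\bb \in \cl^M(A_0^*)$; by monotonicity of $\cl$ we then have $\bb \in \cl^M(A_0)$ whenever $A_0^* \subseteq A_0 \subseteq A$, and I restrict attention to such $A_0$.

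Next I would check that $R_{A_0^*}$ is finite. Any $\bc \in R_{A_0^*}$ satisfies $\gtp(c_i / A_0^*; M) = \gtp(b_i/A_0^*; M)$ in each coordinate $i$, and since $b_i \in \cl^M(A_0^*)$ each such single-element type is algebraic by multiuniversality (Definition \ref{multiuniv-def}), hence has only finitely many realizations in $M$. Thus $R_{A_0^*}$ is a subset of a finite product of finite sets, and is finite. Note also that $A_0 \mapsto R_{A_0}$ is antitone: $A_0 \subseteq A_0'$ forces $R_{A_0'} \subseteq R_{A_0}$, as agreement over $A_0'$ restricts to agreement over $A_0$.

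The key step is to show that $\bigcap_{A_0} R_{A_0} = R_A$, where $A_0$ ranges over the finite subsets of $A$ containing $A_0^*$. The inclusion $\supseteq$ is clear. For $\subseteq$, suppose $\bc$ lies in every $R_{A_0}$; given an arbitrary finite $A_0 \subseteq A$, antitonicity applied to $A_0 \cup A_0^*$ yields $\bc \in R_{A_0}$, so $\gtp(\bc/A_0; M) = \gtp(\bb/A_0; M)$ holds for all finite $A_0 \subseteq A$. Since $\K$ is $(<\aleph_0)$-short (Theorem \ref{tameness-thm}), it is fully $(<\aleph_0)$-tame and short over $A$ (Remark \ref{tameness-rmk}); as $\bb$ and $\bc$ are finite sequences, agreement over all finite $A_0 \subseteq A$ upgrades to $\gtp(\bc/A; M) = \gtp(\bb/A; M)$, i.e.\ $\bc \in R_A$. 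Finally, the collection $\{R_{A_0}\}$ is downward-directed (its index set is closed under finite unions, and $R$ is antitone) and consists of subsets of the finite set $R_{A_0^*}$ with intersection $R_A$; a downward-directed family of subsets of a finite set attains its intersection, so some $R_{A_0}$ equals $R_A$, which is the desired isolating finite subset.

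I expect the main obstacle to be this last step, and specifically the appeal to tameness and shortness over the set $A$: it is precisely what forbids a sequence $\bc$ that agrees with $\bb$ over every finite $A_0 \subseteq A$ yet disagrees over $A$, and it is available only because Theorem \ref{tameness-thm} has been proved first. The remaining ingredients—finite character, coordinatewise algebraicity, and the stabilization of a downward-directed family of finite sets—are routine by comparison.
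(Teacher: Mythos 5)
Your proof is correct and follows essentially the same route as the paper's: finite character to get a finite $A_0^*$ with $\bb \in \cl^M(A_0^*)$, algebraicity to make the realization set of $\gtp(\bb/A_0^*;M)$ finite, and shortness/tameness (Theorem \ref{tameness-thm} with Remark \ref{tameness-rmk}) to pass from agreement over finite subsets of $A$ to agreement over $A$. The only cosmetic difference is packaging: the paper picks one finite $A_1$ separating the finitely many realizations whose types over $A$ differ, while you phrase the same finiteness argument as stabilization of a downward-directed family of subsets of a finite set.
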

\begin{proof}
  Let $\K$ be a multiuniversal class. Let $M \in \K$, $A \subseteq M$, and $\bb$ be a finite sequence of elements in $\cl^M (A)$. Let $p := \gtp (\bb / A; M)$. Fix $A_0 \subseteq A$ finite such that $\bb \in \cl^M (A_0)$. By definition, $p \rest A_0$ is $\aleph_0$-algebraic, hence by Lemma \ref{intersec-alg} is $(n + 1)$-algebraic for some $n < \omega$. Let $\bb_0, \ldots, \bb_{n - 1}$ be all the realizations of $p \rest A_0$ inside $M$. Now pick $A_1 \subseteq A$ finite such that $A_0 \subseteq A_1$ and for any $i < j < n$, $\gtp (\bb_i / A; M) \neq \gtp (\bb_j / A; M)$ implies that $\gtp (\bb_i / A_1; M) \neq \gtp (\bb_j / A_1; M)$. This is possible by shortness (Theorem \ref{tameness-thm} and Remark \ref{tameness-rmk}). Now it is easy to check that $p \rest A_1$ isolates $p$ in $M$.
\end{proof}

\subsection{Topological and category-theoretic characterizations of multiuniversal classes}\label{top-sec}

While this is not needed for the rest of the paper, it is interesting to note that multiuniversal classes can be characterized in terms of the compactness of certain automorphism groups (we say that a group $G$ of automorphism of a structure $M_0$ is \emph{compact} if it is compact in the product space $\fct{M_0}{M_0}$, where $M_0$ itself is given the discrete topology). The proof is standard, so we omit it. See for example section 4.1 of \cite{hom-struct} for some hints.

\begin{thm}\label{top-charact}
  Let $\K$ be an AEC with intersections. The following are equivalent:

  \begin{enumerate}
  \item\label{topo-1} $\K$ is multiuniversal.
  \item\label{topo-2} For any $M \in \K$ and any $A \subseteq M$, $\Aut_A (\cl^M (A))$ is compact.
  \item\label{topo-3} For any $M \in \K$ and any \emph{finite} $A \subseteq M$, $\Aut_A (\cl^M (A))$ is compact.
  \end{enumerate}
\end{thm}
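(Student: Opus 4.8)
The plan is to fix $M \in \K$ and $A \subseteq M$, write $M_0 := \cl^M (A)$ (so that $M_0 \lea M$ and $M_0 \in \K$ since $\K$ has intersections), and reduce everything to a single bridge statement: \emph{for each $b \in M_0$, the orbit $O_b := \{\sigma (b) : \sigma \in \Aut_A (M_0)\}$ equals the set of realizations of $\gtp (b / A; M_0)$ inside $M_0$.} First I would recall the elementary topological fact that a subset $S$ of the product $\fct{M_0}{M_0}$ of discrete copies of $M_0$ is compact if and only if it is closed and each projection $\pi_b (S) = \{f (b) : f \in S\}$ is finite. Since $\pi_b (\Aut_A (M_0)) = O_b$, the bridge statement lets me translate compactness of $\Aut_A (M_0)$ into finiteness of all the $O_b$, and hence into multiuniversality.

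To prove the bridge statement, the key observation is that adding an element of the closure does not change the closure. For any $b \in M_0 = \cl^M (A)$ we have $A b \subseteq M_0$, so by monotonicity together with Fact \ref{inter-basics} (closure is absolute, as $M_0 \lea M$) we get $\cl^{M_0} (A b) = \cl^{M_0} (A) = M_0$, and likewise $\cl^{M_0} (A b') = M_0$ for every $b' \in M_0$. Consequently, the isomorphism of closures $\cl^{M_0} (A b) \to \cl^{M_0} (A b')$ furnished by the characterization of equality of Galois types (Fact \ref{inter-basics}) is in fact an element of $\Aut_A (M_0)$ sending $b$ to $b'$. Thus $b'$ realizes $\gtp (b / A; M_0)$ if and only if $b' \in O_b$, which is the bridge statement. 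Using Lemma \ref{intersec-alg}(2) (the number of realizations of a type realized in $\cl^M (A)$ is the same in $M_0$ as in any model, in particular as in $M$), I would conclude that $O_b$ is finite if and only if $\gtp (b / A; M)$ is algebraic.

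Next I would show that $\Aut_A (M_0)$ is compact if and only if every $O_b$ is finite. One direction is immediate: each projection $\pi_b$ is continuous, so if $\Aut_A (M_0)$ is compact then $O_b = \pi_b (\Aut_A (M_0))$ is a compact, hence finite, subset of the discrete space $M_0$. For the converse, assuming all $O_b$ finite, note $\Aut_A (M_0) \subseteq \prod_{b \in M_0} O_b$, which is compact by Tychonoff's theorem since each factor is finite discrete. It then suffices to exhibit $\Aut_A (M_0)$ as the intersection of this compact set with the closed set $G \subseteq \fct{M_0}{M_0}$ of all injective, $A$-fixing maps that preserve and reflect every relation and commute with every function symbol; each defining condition of $G$ depends on only finitely many coordinates and is therefore clopen, so $G$ is closed. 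The one subtle point is surjectivity: an $f \in G \cap \prod_b O_b$ need not a priori be onto, but since $O_{c} = O_{b}$ whenever $c \in O_b$, such an $f$ maps each finite orbit $O_b$ injectively into itself, hence onto it, so $b \in f [O_b] \subseteq \ran{f}$; thus $f$ is surjective and therefore an automorphism. Hence $\Aut_A (M_0) = G \cap \prod_{b \in M_0} O_b$ is closed in a compact space, so compact.

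Finally I would assemble the equivalences. Combining the last two paragraphs, for fixed $M$ and $A$ we obtain that $\Aut_A (M_0)$ is compact if and only if $\gtp (b / A; M)$ is algebraic for every $b \in \cl^M (A)$. Quantifying over all $M$ and all $A$ yields (\ref{topo-1}) $\Leftrightarrow$ (\ref{topo-2}) directly, and (\ref{topo-2}) $\Rightarrow$ (\ref{topo-3}) is trivial. For (\ref{topo-3}) $\Rightarrow$ (\ref{topo-1}), given arbitrary $A$ and $b \in \cl^M (A)$, finite character (Fact \ref{inter-basics}) provides a finite $A_0 \subseteq A$ with $b \in \cl^M (A_0)$; condition (\ref{topo-3}) makes $\gtp (b / A_0; M)$ algebraic, and since every realization of $\gtp (b / A; M)$ also realizes $\gtp (b / A_0; M)$, the type $\gtp (b / A; M)$ is algebraic as well. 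The step I expect to be the main obstacle is the identification of orbits with realization sets: it hinges on the closure coincidences above and on equality of Galois types being witnessed by isomorphisms of closures, together with the finite-orbit trick needed to promote injective endomorphisms to surjections.
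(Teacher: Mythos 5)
Your proof is correct, and it fills in exactly the ``standard'' argument that the paper omits (pointing to \cite{hom-struct} for hints): you identify the orbits of $\Aut_A(\cl^M(A))$ with the realization sets of Galois types over $A$ via Fact \ref{inter-basics} and Lemma \ref{intersec-alg}, and translate compactness in $\fct{M_0}{M_0}$ into finiteness of all orbits. The one genuinely delicate point---that a limit of automorphisms could fail to be surjective---is handled correctly by your observation that an injective endomorphism preserving the finite orbits must permute each of them.
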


We deduce a purely category-theoretic characterization of multiuniversal classes. Recall that a group $G$ is \emph{profinite} if it is an inverse limit of finite groups. Equivalently, \cite[Corollary 1.2.4]{wilson-profinite} $G$ can be topologized so that it is compact, Hausdorff, and totally disconnected. Note that the former definition of profinite is completely algebraic, so whether a group is profinite does not depend on how it is topologized. On general grounds, the automorphism groups of structures discussed in the previous theorem are always Hausdorff and totally disconnected (any two distinct points are separated by clopen sets). Thus they are compact if and only if they are profinite. We can use profinite groups to characterize what kind of accessible categories are multiuniversal classes (we thank Jiří Rosický for suggesting profinite groups as the solution to this problem). We briefly sketch how, but assume a good understanding of the related characterizations in \cite{multipres-pams}.

The AECs with intersections are exactly the locally $\aleph_0$-polypresentable categories with all morphisms monos \cite[5.7]{multipres-pams}. The locally polypresentable categories are defined using the concept of a polyinitial object: a set $\mathcal{I}$ of objects such that for any object $M$ of the category there is a unique object $i \in \mathcal{I}$ having a morphism $i \to M$, and for any $f, g: i \to M$, there is a unique automorphism $h: i \to i$ with $fh = g$. Call such a polyinitial object $\mathcal{I}$ \emph{profinite} if for each $i \in \mathcal{I}$, the group of automorphisms of $i$ is profinite. For example, the category of algebraically closed fields has a profinite polyinitial object (consisting of the algebraic closures of the prime fields for each characteristic). Using the concept of a profinite polyinitial object, we can then define the notion of a locally profinite-polypresentable category, and prove that multiuniversal AECs correspond to locally $\aleph_0$-profinite-polypresentable categories with all morphisms monos.

\section{Eventual categoricity}

The following notation for certain threshold cardinals that come up often in the theory of AECs will be used:

\begin{definition}\label{hanf-def}
 For a cardinal $\mu$, we set $\hanf{\mu} := \beth_{\left(2^{\mu}\right)^+}$. There will be a cardinal, $\hanf{\K}$ associated to an AEC which will be important. We refer the reader to \cite[Definition 2.16]{categ-universal-2-selecta} for a precise definition but note this cardinal satisfies $\beth_{\LS (\K)^+} \le \hanf{\K} \le \hanf{\LS (\K)}$.
\end{definition}

The goal of this section is to prove the main result of the paper:

\begin{thm}\label{main-thm}
  Let $\K$ be an AEC. Assume that:

  \begin{enumerate}
  \item $\K$ has intersections (Definition \ref{inter-def}).
  \item $\K$ has quantifier-free types (Definition \ref{syntactic-def}).
  \item $\K$ has finitary isolation (Definition \ref{isolation-axiom}).
  \end{enumerate}

  If $\K$ is categorical in \emph{some} $\lambda \ge \beth_{\hanf{\K}}$, then $\K$ is categorical in \emph{all} $\lambda' \ge \beth_{\hanf{\K}}$.
\end{thm}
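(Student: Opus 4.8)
The plan is to reduce Theorem \ref{main-thm} to the already-known categoricity transfer for universal classes (the Fact from \cite{categ-universal-2-selecta}) by passing through a vocabulary expansion that turns $\K$ into a genuine universal class. The three hypotheses—intersections, quantifier-free types, and finitary isolation—are exactly the structural properties of multiuniversal classes isolated in Theorems \ref{tameness-thm}, \ref{isolation-thm}, and Fact \ref{syntactic-fact}, so this theorem is meant to be the ``axiomatic'' core that Corollary \ref{main-cor} then applies to multiuniversal classes. Concretely, I would first invoke Lemma \ref{mc-fact}: since $\K$ has quantifier-free types, $\K$ is model complete, so $\lea$ coincides with the substructure relation $\subseteq$. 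The remaining gap between $\K$ and a universal class is closure under \emph{arbitrary} substructures, which fails precisely because of algebraic (non-definable) elements in $\cl^M(A)$.

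The heart of the argument is to expand the vocabulary by Skolem-type functions that internalize the closure operator, using finitary isolation to make the choices canonical up to the finitely-many algebraic alternatives. For each finite tuple $\bb$ and each Galois type $p = \gtp(\bb/A_0; M)$ that isolates $\gtp(\bb/A; M)$ (finitary isolation guarantees such a finite $A_0$ exists), I would add function symbols whose interpretations pick out the finitely-many realizations of the relevant algebraic type, exactly as in the passage from a $2$-multiuniversal AEC to a universal class recorded in Remark \ref{eta-multi-rmk}(1) and \cite[Theorem 2.8]{multipres-pams}. After this expansion $\K$ becomes (interdefinably) a universal class $\K^*$ with the same models, the same $\lea$, and the same Galois types up to the functorial correspondence; in particular $|\tau(\K^*)|$ is controlled by $\LS(\K)$, and $\hanf{\K^*} = \hanf{\K}$ because the Hanf threshold depends only on $\LS(\K)$ (Definition \ref{hanf-def}). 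Since categoricity in a cardinal is invariant under such an interdefinable expansion (the models are in bijection, preserving cardinality), categoricity of $\K$ in $\lambda \ge \beth_{\hanf{\K}}$ transfers to categoricity of $\K^*$ in $\lambda$, the universal-class Fact applies to $\K^*$ to give categoricity in all $\lambda' \ge \beth_{\hanf{\K^*}} = \beth_{\hanf{\K}}$, and transferring back yields the conclusion for $\K$.

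The main obstacle I anticipate is the construction of the expansion and the verification that it genuinely produces a universal class while preserving the AEC structure functorially. Adding functions whose values are the finitely-many algebraic roots requires a coherent \emph{choice} of indexing across all models—one must ensure that a single function symbol interpreted in $M$ restricts correctly to its interpretation in any $N \lea M$ (so that substructure closure is respected) and that the expanded class is still closed under isomorphism, substructure, and unions of chains. Finitary isolation is what makes this possible: it guarantees that the orbit of an algebraic element over $A$ is already pinned down by a finite $A_0 \subseteq A$, so the function symbols need only finite arity and their interpretations cohere. I would lean on \cite[Theorem 2.8]{multipres-pams} (which handles precisely this expansion in the $2$-multiuniversal, i.e.\ universal, case) and argue that finitary isolation reduces the general multiuniversal situation to finitely-many coordinated applications of that construction.

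Alternatively, if a direct appeal to the expansion is too delicate, I would instead run the proof by following the skeleton of \cite{categ-universal-2-selecta} line by line, checking that every place where universality is invoked uses only one of the three listed hypotheses; this is the ``same lines'' approach announced in the introduction, and it is the likely intended route, with the expansion argument serving as conceptual justification that nothing is lost.
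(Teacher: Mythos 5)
Your primary route---expanding the vocabulary by Skolem-type functions so that $\K$ becomes a universal class and then quoting the known transfer theorem for universal classes---does not work, and the obstruction is precisely the one the paper flags in its introduction. When $b \in \cl^M(A)$ has an algebraic type with more than one realization (think of $\pm\sqrt{-2}$ over $\mathbb{Q}$ inside an algebraically closed field), a function symbol must \emph{choose} one of the finitely-many conjugates, and no canonical choice exists; finitary isolation pins the type down over a finite $A_0$ but does nothing to break the symmetry among its realizations. The result you lean on, \cite[Theorem 2.8]{multipres-pams}, applies only to $2$-multiuniversal classes, where each such type has a \emph{unique} realization so the choice is forced---that is exactly the boundary between universal and multiuniversal, and Remark \ref{eta-multi-rmk}(1) is stated for that case only. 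Even if you take $\K^*$ to be the class of \emph{all} expansions by admissible choice functions (so as to keep closure under isomorphism), the reduction fails at its first step: two different systems of choice functions on the same model of $\K$ can produce non-isomorphic expansions, so the models of $\K$ and $\K^*$ are not in bijection and categoricity of $\K$ in $\lambda$ need not transfer to $\K^*$; the universal-class theorem then has nothing to apply to.

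Your fallback (``follow the skeleton of \cite{categ-universal-2-selecta} line by line'') is indeed the paper's actual route, but as written it contains none of the mathematics. The paper isolates the single place where universality is genuinely used, namely \cite[Fact 6.10]{categ-universal-2-selecta}, and reproves it under the three hypotheses as Theorem \ref{selecta-variation}: one must show that the relation $\nf$ defined via averages of convergent sequences, together with $\cl$, satisfies $\Axfr$ (existence via the compactness theorem for AECs with quantifier-free types, uniqueness, symmetry, base enlargement), and the key new ingredient is the claim that averageability of $\tpqf (\ba / M_0; M)$ propagates to $\tpqf (\bb / M_0; M)$ when $\bb \in \cl^M (\ba)$ and $\gtp (\bb / \ba; M)$ isolates $\gtp (\bb / M_0; M)$---this is exactly where finitary isolation replaces the term-extension argument available in universal classes. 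None of this appears in your proposal, so the gap is the entire technical core of the proof.
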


Note that universal classes are AECs with quantifier-free types and intersections with finitary isolation (intersections hold by \cite[Example 2.6(1)]{ap-universal-apal}, having quantifier-free types holds by \cite[Remark 3.8]{ap-universal-apal}, and finitary isolation holds by Theorem \ref{isolation-thm}). Thus Theorem \ref{main-thm} generalizes the categoricity theorem for universal classes of the third author \cite[Theorem 7.3]{categ-universal-2-selecta}. In fact the proof is essentially the same, but one has to check that everything still goes through.

Theorem \ref{main-thm} also gives us our desired categoricity transfer for multiuniversal classes as a corollary of our previous analysis.

\begin{cor}\label{main-cor}
  Let $\K$ be a multiuniversal AEC. If $\K$ is categorical in \emph{some} $\lambda \ge \beth_{\hanf{2^{\LS (\K)}}}$, then $\K$ is categorical in \emph{all} $\lambda' \ge \beth_{\hanf{2^{\LS (\K)}}}$.
\end{cor}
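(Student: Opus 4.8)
The plan is to derive Corollary~\ref{main-cor} from Theorem~\ref{main-thm} by verifying that a multiuniversal AEC satisfies the three hypotheses of that theorem, and then matching up the threshold cardinals. The substantive work has all been done in the earlier sections, so this corollary is essentially an assembly step.

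First I would verify the three hypotheses of Theorem~\ref{main-thm} for an arbitrary multiuniversal AEC $\K$. Hypothesis (1), that $\K$ has intersections, is immediate: it is part of the definition of multiuniversality (Definition~\ref{multiuniv-def}). Hypothesis (3), finitary isolation, is exactly the content of Theorem~\ref{isolation-thm}. The remaining hypothesis (2), that $\K$ has quantifier-free types, requires a short argument since Definition~\ref{syntactic-def} is a statement about the vocabulary $\tau(\K)$, and a general multiuniversal class need not already be $(<\aleph_0)$-Morleyized. By Theorem~\ref{tameness-thm}, $\K$ is $(<\aleph_0)$-short. By the discussion preceding Fact~\ref{syntactic-fact}, every AEC has a functorial expansion to a $(<\aleph_0)$-Morleyized AEC $\K^\ast$; this expansion preserves shortness (it does not change the Galois types), so $\K^\ast$ is $(<\aleph_0)$-short and $(<\aleph_0)$-Morleyized, and hence has quantifier-free types by Fact~\ref{syntactic-fact}. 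One must also check that this functorial expansion preserves intersections and finitary isolation, and that categoricity transfers back and forth between $\K$ and $\K^\ast$ in the same cardinals; this holds because a functorial expansion is a bijection on isomorphism types in each cardinal and leaves the closure operator and Galois types unchanged.

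Having reduced to $\K^\ast$, I would apply Theorem~\ref{main-thm} to conclude that categoricity in some $\lambda \ge \beth_{\hanf{\K^\ast}}$ transfers to all $\lambda' \ge \beth_{\hanf{\K^\ast}}$. The final step is to bound $\hanf{\K^\ast}$ by the threshold $\hanf{2^{\LS(\K)}}$ appearing in the corollary. Using Definition~\ref{hanf-def}, we have $\hanf{\K^\ast} \le \hanf{\LS(\K^\ast)}$, and since the Galois Morleyization adds one predicate for each Galois type over the empty set of finite length, we have $|\tau(\K^\ast)| \le |\tau(\K)| + |\gS^{<\omega}(\emptyset)| \le 2^{\LS(\K)}$, whence $\LS(\K^\ast) \le 2^{\LS(\K)}$. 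Therefore $\hanf{\K^\ast} \le \hanf{\LS(\K^\ast)} \le \hanf{2^{\LS(\K)}}$ (the last inequality using monotonicity of $\mu \mapsto \hanf{\mu}$), and so $\beth_{\hanf{\K^\ast}} \le \beth_{\hanf{2^{\LS(\K)}}}$, which gives the stated threshold.

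The main obstacle I expect is not any deep mathematics but the bookkeeping around the functorial expansion: one must be careful that passing to the $(<\aleph_0)$-Galois Morleyization genuinely preserves all three hypotheses of Theorem~\ref{main-thm} (intersections and finitary isolation, in addition to the quantifier-free types it is designed to produce) and that it neither creates nor destroys categoricity in any cardinal. Since a functorial expansion induces an isomorphism of the underlying AECs as abstract categories and does not alter the closure operator $\cl$, the Galois types, or the cardinalities of models, each of these checks is routine but does need to be stated. The only genuinely quantitative point is the cardinal arithmetic $\LS(\K^\ast) \le 2^{\LS(\K)}$, which is what forces the threshold in the corollary to be $\beth_{\hanf{2^{\LS(\K)}}}$ rather than the smaller $\beth_{\hanf{\K}}$ of the general theorem.
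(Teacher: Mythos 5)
Your proposal is correct and follows essentially the same route as the paper: verify intersections (by definition), $(<\aleph_0)$-shortness (Theorem \ref{tameness-thm}) leading to quantifier-free types after passing to the $(<\aleph_0)$-Galois Morleyization, and finitary isolation (Theorem \ref{isolation-thm}), then apply Theorem \ref{main-thm} to the expanded class whose L\"owenheim--Skolem--Tarski number is at most $2^{\LS(\K)}$. The paper's proof is terser and leaves the preservation checks for the functorial expansion implicit, but your more explicit bookkeeping is exactly what is intended.
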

\begin{proof}
  By definition, $\K$ has intersections. By Theorem \ref{tameness-thm}, $\K$ is $(<\aleph_0)$-short and by Theorem \ref{isolation-thm}, $\K$ has finitary isolation. Morleyize by adding at most $2^{\LS (\K)}$-relation symbols for Galois types of finite length (see \cite[\S3]{sv-infinitary-stability-afml}). We obtain an AEC with Löwenheim-Skolem-Tarski number at most $2^{\LS (\K)}$ which satisfies the hypotheses of Theorem \ref{main-thm} and the Moleyization preserves categoricity.
\end{proof}
\begin{remark}
  It is worth noting that there are AECs which satisfy the conditions of Theorem \ref{main-thm} but are not multiuniversal. In particular, Example \ref{mu-example}(\ref{mu-example-6}) is one such AEC.
\end{remark}

The proof of Theorem \ref{main-thm} goes along the lines of the corresponding result for universal classes proven in \cite[Theorem 7.3]{categ-universal-2-selecta}. We will quote and use terminology freely from \cite{categ-universal-2-selecta} (particularly Section 6). The reader is advised to have a copy open on their desk as they read this proof, although we will try to give a sense of the definitions used. The proof traces back to checking that some arguments of Shelah from \cite{sh300-orig} (or the revised version from \cite[Chapter V]{shelahaecbook2}) still go through in the setup of this paper. We may occasionally have to cite from \cite[Chapter V]{shelahaecbook2}, as \cite{categ-universal-2-selecta} relies on it.

A crucial tool in this work is the use of averages.  Recall that we have moved to a context where Galois types are quantifier-free.  This allows us to make the following definition: given a sequence $\seq{\bb_i : i \in I}$, a model $M$, a subset $A \subseteq M$ and a cardinal $\chi$, $\Av_\chi \left(\seq{\bb_i : i \in I} / A; M\right)$ is the set of quantifier-free first-order formulas $\phi (\bx, \ba)$ such that $M \models \phi[\bb_i, \ba]$ for all but fewer than $\chi$-many $i$'s. As opposed to the first-order case, $\Av_\chi \left(\seq{\bb_i : i \in I} / A; M\right)$ is not necessarily a complete type. We call the sequences $\seq{\bb_i : i \in I}$ whose average \emph{does} yield a complete type \emph{convergent} (after suppressing some parameters). The main technique for finding convergent sequences is \cite[Theorem V.A.2.8]{shelahaecbook2}, which says that, if the class fails to have an appropriate order property, then any sufficiently large set of parameters can be `pruned' to a convergent subset of the same size.  The assumption of no order property follows from categoricity by \cite[Lemma 7.1]{categ-universal-2-selecta}.

It suffices to prove \cite[Fact 6.10]{categ-universal-2-selecta} for classes satisfying the assumptions of Theorem \ref{main-thm}. Then the rest of the proof of \cite[Theorem 7.3]{categ-universal-2-selecta} is exactly the same. More precisely, we will show:

\begin{thm}\label{selecta-variation}
  Let $\K$ be an AEC. Assume that:
  
  \begin{enumerate}
  \item $\K$ has intersections.
  \item $\K$ has quantifier-free types.
  \item $\K$ has finitary isolation.
  \item $\chi \ge \LS (\K)$ is such that $\K$ does not have the order property of length $\chi^+$.

  \end{enumerate}

  Set $\mu := 2^{2^{\chi}}$ and let $\K^0 := (K, \le^{\chi^+, \mu^+})$ ($\le^{\chi^+, \mu^+}$ is the ordering defined in \cite[Definition 6.7]{categ-universal-2-selecta}: roughly it requires that $M \subseteq N$, and the type of any $\bb \in {}^{<\omega}N$ over $M$ is the average of some sequence from $M$). Then:

  \begin{enumerate}
  \item $\K^0$ is a weak AEC with $\LS (\K^0) \le \mu^+$ (this means it satisfies the axioms of an AEC except perhaps smoothness of unions).
  \item Let $\cl^M$ be the closure operator on $\K$ and let $\nf$ be the $4$-ary relation defined in the statement of \cite[Fact 6.10]{categ-universal-2-selecta}; the key condition is that $M_1\nf_{M_0}^{M_3} M_2$ implies that the Galois type of any $\ba \in M_1$ over $M_2$ is the average of a sequence from $M_0$.\\
  We then have that $(\K^0, \nf, \cl)$ satisfies $\Axfr$ (see \cite[Definition 4.1]{categ-universal-2-selecta}). Moreover, 
  \begin{enumerate}
      \item $\cl$ is algebraic: $\cl^M (A) = \cl^N (A)$ whenever $M \subseteq N$ (see \cite[Definition 5.22]{categ-universal-2-selecta}); and
      \item $\nf$ is $\mu^+$-based (see \cite[Definition 4.12]{categ-universal-2-selecta}).
    \end{enumerate}
  \end{enumerate}
\end{thm}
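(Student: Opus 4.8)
The plan is to follow the proof of \cite[Fact 6.10]{categ-universal-2-selecta} essentially line by line, replacing each appeal to the defining features of universal classes by the hypothesis available here. The guiding dictionary is this: in a universal class the closure operator is definable closure, so every element of $\cl^M(A)$ is a $\tau(\K)$-term over $A$ and a quantifier-free type over $\cl^M(A)$ is uniquely determined by its restriction to $A$; in our setting this uniqueness is replaced by finitary isolation (hypothesis (3), Definition \ref{isolation-axiom}), which says that a finite tuple from $\cl^M(A)$ has its type over $A$ isolated by its type over a finite $A_0 \subseteq A$. The two remaining structural inputs --- that Galois types are syntactic and that $\cl$ is absolute --- are supplied by hypotheses (1) and (2) together with model completeness (Lemma \ref{mc-fact}). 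The former is precisely what makes each average $\Av_\chi$ a well-defined complete quantifier-free type, and hence what makes the relation $\nf$ meaningful at all.

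For the first item, I would check directly that $\le^{\chi^+,\mu^+}$ is a partial order refining $\lea$ and that $(K, \le^{\chi^+,\mu^+})$ satisfies every AEC axiom except smoothness, exactly as in Shelah's treatment of this ordering in \cite[Chapter V.A]{shelahaecbook2} and \cite[Section 6]{categ-universal-2-selecta}; smoothness is the one clause one does not expect to survive, since the average of a tuple over a union of a chain need not reduce to an average over any single member. The only point requiring genuine computation is the bound $\LS(\K^0) \le \mu^+$. Given $N \in \K$ and $A \subseteq N$ with $|A| \le \mu^+$, I would build a $\subseteq$-increasing chain $M_0 \subseteq M_1 \subseteq \cdots$ with $A \subseteq M_0$ and each $|M_n| \le \mu^+$, closing off at each stage so that the quantifier-free type of every finite tuple of $M_{n+1}$ over $M_n$ is the average of a convergent sequence lying already in $M_{n+1}$. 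The existence of such convergent witnesses of bounded size is exactly where hypothesis (4) enters: the failure of the order property of length $\chi^+$ lets one extract from any parameter set a convergent subset via \cite[Theorem V.A.2.8]{shelahaecbook2}, and the value $\mu = 2^{2^\chi}$ is the number of average-types one must accommodate. The union $M := \bigcup_n M_n$ then witnesses $A \subseteq M \le^{\chi^+,\mu^+} N$ with $|M| \le \mu^+$.

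For the second item I would verify the clauses of $\Axfr$ (\cite[Definition 4.1]{categ-universal-2-selecta}) for $(\K^0, \nf, \cl)$ one at a time. Algebraicity of $\cl$ (clause (a)) is immediate: if $M \subseteq N$ then $M \lea N$ by model completeness, so $\cl^M(A) = \cl^N(A)$ by Fact \ref{inter-basics}(1). The invariance, monotonicity, and normality clauses for $\nf$ are formal consequences of the average-based definition and of Galois types being quantifier-free, while existence and extension follow from the pruning theorem under the no-order-property hypothesis exactly as in the universal case. The clauses demanding care are uniqueness (stationarity) and the $\mu^+$-basedness of clause (b). In the universal argument these rest on the type over $\cl^{M_3}(M_0)$ being pinned down by its restriction to $M_0$ because closure is definable; here a type over the closure has only finitely many, rather than a unique, extension. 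This is the gap that finitary isolation closes: for $\ba \in M_1$, the quantifier-free type of $\ba$ over $M_2$ is isolated by its restriction to a finite subset, so checking that it agrees with the average of a sequence from $M_0$ collapses to finitely many formulas over a small parameter set, yielding both the uniqueness clause and a witnessing subset of the size required for basedness.

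The main obstacle is precisely this reconciliation of the average-based independence relation with the non-uniqueness of extensions through the algebraic closure. Once one observes that finitary isolation reduces every assertion about types over $M_2$ to an assertion about types over finite subsets of it, the finitely-many algebraic extensions can be sorted out as in the proof of Theorem \ref{isolation-thm}, after which the remaining bookkeeping should be routine and the universal-class argument of \cite[Fact 6.10]{categ-universal-2-selecta} applies with only cosmetic changes.
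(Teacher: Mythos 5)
Your overall strategy --- follow the universal-class proof of \cite[Fact 6.10]{categ-universal-2-selecta} and substitute finitary isolation for the uniqueness of extensions through definable closure --- is the right one, and your treatment of item (1) and of algebraicity of $\cl$ matches the paper. But there are two genuine gaps in item (2). First, the real technical content of the paper's proof is a pair of concrete claims that you only gesture at: (Claim 1) if $\bb$ lies in $\cl^M(\ba)$ and $\gtp(\bb/\ba;M)$ isolates $\gtp(\bb/M_0;M)$, then averageability of $\tpqf(\ba/M_0;M)$ over $M_0$ implies averageability of $\tpqf(\bb/M_0;M)$; and (Claim 2, the consequence) $\nfs{M_0}{M_1}{M_2}{M_3}$ implies $M_2 \leap{\K^0} M_3$. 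Claim 2 is not about tuples from $M_1$, as you suggest, but about arbitrary finite tuples from $\cl^{M_3}(M_1\cup M_2)$, which the definition of $\nf$ says nothing about directly; and the proof of Claim 1 is not a matter of ``collapsing to finitely many formulas'': one must prune the convergent witness $\seq{\ba_i}$ so that each $\ba_i$ has the same type over $\emptyset$ as $\ba$, transport $\bb$ backward along the resulting isomorphisms $f_i:\cl^{M_0}(\ba_i)\to\cl^M(\ba)$ to get candidates $\bb_i$, prune again for convergence, and only then invoke isolation to see that the transported realizations have the full type of $\bb$ over $M_0$. This argument is what makes existence and base enlargement work, and it is missing from your proposal.

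Second, your assertion that existence follows ``from the pruning theorem \ldots exactly as in the universal case'' is wrong. In a universal class the amalgam can be generated by terms over $M_1\cup M_2$; here the paper must assemble a complete quantifier-free type $p$ out of the averages $\Av(\BI_u/M_2;M_2)$ for finite $u$, check it is strongly finitely $\K$-satisfiable, and realize it by invoking the compactness theorem for AECs with quantifier-free types (Theorem \ref{compactness-thm}) --- a new ingredient proved earlier in the paper precisely for this purpose, which your outline never mentions. Relatedly, you locate the need for finitary isolation in uniqueness and $\mu^+$-basedness, but the paper states that $\mu^+$-basedness goes through \emph{unchanged} from the universal case and handles uniqueness by citing \cite[Claim V.A.4.6(2)]{shelahaecbook2} plus the equality of Galois and quantifier-free types; the clauses that genuinely require the new machinery are existence (via compactness and Claim 2) and base enlargement (via Claim 2 together with a transitivity argument for $\nf$).
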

\begin{proof}[Proof of Theorem \ref{main-thm}]
    By the proof of \cite[Theorem 7.3]{categ-universal-2-selecta}, substituting Theorem \ref{selecta-variation} to \cite[Fact 6.10]{categ-universal-2-selecta}.
\end{proof}
\begin{proof}[Proof of Theorem \ref{selecta-variation}]
  That $\K^0$ is a weak AEC with $\LS (\K^0) \le \mu^+$ is as in the proof of \cite[6.10]{categ-universal-2-selecta}. Now $\cl$ is algebraic because $\K$ is model complete (Lemma \ref{mc-fact}). That $\nf$ is $\mu^+$-based is proven exactly as in the proof of \cite[Fact 6.10]{categ-universal-2-selecta}. It remains to see that $(\K^0, \nf, \cl)$ satisfies $\Axfr$. From now on, we write ``convergent'' instead of ``$(\chi^+, \mu^+)$-convergent'', ``averageable'' instead of ``$(\chi^+, \mu^+)$-averageable'', and ``$\Av (\BI / A; M)$'' instead of ``$\Av_{\chi^+} (\BI /A ; M)$''. Two key claims will be:

  \underline{Claim 1}: Let $M_0, M \in K$ with $M_0 \subseteq M$. Let $\ba, \bb \in M$ be finite sequences such that $\bb \in \cl^M (\ba)$ and $\gtp (\bb / \ba; M)$ isolates $\gtp (\bb / M_0; M)$ in $M$. If $\tpqf (\ba / M_0; M)$ is averageable over $M_0$ in $M$, then so is $\tpqf (\bb / M_0; M)$.

  \underline{Proof of Claim 1}: Note that Galois and quantifier-free types are interchangeable by hypothesis. Let $\BI := \seq{\ba_i : i \in I}$ be a sequence of elements in $M_0$ of the same arity as $\ba$ which is convergent and such that $\Av (\BI / M_0; M) = \tpqf (\ba / M_0; M)$. Since $\BI$ has at least $\mu^+$-many elements, we can prune it further to assume without loss of generality that $\tpqf (\ba_i / \emptyset; M) = \tpqf (\ba / \emptyset; M)$ for all $i \in I$. Thus for each $i \in I$ there exists an isomorphism $f_i : \cl^{M_0} (\ba_i) \rightarrow \cl^{M} (\ba)$ sending $\ba_i$ to $\ba$. For $i \in I$, let $\bb_i := f_i^{-1} (\bb)$. Let $\BJ := \seq{\bb_i : i \in I}$. By pruning, we can assume without loss of generality that $\BJ$ is convergent. We want to see that $\Av (\BJ / M_0; M) = \tpqf (\bb / M_0; M)$.

  Let $\phi (\bx; \bc) \in \Av (\BJ / M_0; M)$. This means that for all but at most $\chi$-many $i \in I$, $M \models \phi[\bb_i; \bc]$. Let $p := \tpqf (\ba \bb / \emptyset; M)$. By construction of $\bb_i$, $p = \tpqf (\ba_i \bb_i / \emptyset; M)$ for all $i \in I$. Now by averageability, for most $i \in I$, $\tpqf (\ba_i / \bc; M) = \tpqf (\ba / \bc; M)$. Let $g_i : \cl^M (\ba_i) \rightarrow \cl^M (\ba)$ be an isomorphism sending $\ba_i\bc$ to $\ba \bc$, and let $\bb'_i := g_i (\bb_i)$. Then for most $i$, $M \models \phi[\bb'_i; \bc] \land p[\bb'_i; \ba]$. Thus $\gtp (\bb / \ba; M) = \gtp (\bb'_i / \ba; M)$, but by isolation this means that $\gtp (\bb'_i / M_0; M) = \gtp (\bb / M_0; M)$, so in particular $M \models \phi[\bb; \bc]$, as desired. $\dagger_{\text{Claim 1}}$

  \underline{Claim 2}: If $\nfs{M_0}{M_1}{M_2}{M_3}$, then $M_2 \leap{\K^0} M_3$

  \underline{Proof of Claim 2}: By definition of $\nf$ and transitivity of $\leap{\K^0}$ we can assume without loss of generality that $M_3 = \cl^{M_3} (M_1 \cup M_2)$. Let $\bb \in M_3$ be a finite sequence. We have to see that $\tpqf (\bb / M_2; M_3)$ is averageable over $M_2$. By finitary isolation, there is a finite $A \subseteq M_1 \cup M_2$ such that $\tpqf (\bb / M_1 \cup M_2; M_3)$ is isolated by $\tpqf (\bb / A; M_3)$ in $M_3$. In particular, $\tpqf (\bb / M_2)$ is isolated by $\tpqf (\bb / A; M_3)$ in $M_3$. Let $\ba$ be an enumeration of $A$. By Claim 1, it suffices to see that $\tpqf (\ba / M_2; M_3)$ is averageable over $M_2$ in $M_3$. Now by definition of $\nf$, $\tpqf (\ba / M_2; M_3)$ is averageable over $M_0$, hence over $M_2$, as desired. $\dagger_{\text{Claim 2}}$

  We now prove all the axioms from the definition of $\Axfr$ in \cite[Definition 4.1]{categ-universal-2-selecta}. Most of them have either been observed before or (like finite character) follow from Lemma \ref{inter-basics}. It suffices to show existence, uniqueness, symmetry, and base enlargement. 

  \begin{enumerate}
  \item \underline{Existence}: Let $M_0 \leap{\K^0} M_\ell$, $\ell = 1,2$. First, we build $M_3 \in \K^0$ and $\K$-embeddings $f_\ell : M_\ell \rightarrow M_3$ for $\ell = 1,2$ such that 
  \begin{enumerate}
    \item $f_1 \rest M_0 = f_2 \rest M_0$;
    \item $\nfs{f_1[M_0]}{f_1[M_1]}{f_2[M_2]}{M_3}$; and 
    \item $M_3 = \cl^M_3 (f_1[M_1] \cup f_2[M_2])$.
    \end{enumerate}
    Note that we do not claim that the $f_\ell$'s are $\K^0$-embeddings, although this will follow from Claim 2 and Symmetry below.
    
    Let $\lambda := |M_1| + |M_2| + \aleph_0$. For $\ell = 1,2$, let $\bc^\ell := \seq{c_i^\ell : i < \lambda}$ be an enumeration (possibly with repetitions) of $M_\ell$. For $u \subseteq \lambda$, write $\bc_u^\ell$ for $\bc^\ell \rest u$. For each finite $u \subseteq \lambda$, by definition of $\leap{\K^0}$, there is a convergent sequence $\BI_u$ inside $M_0$ such that $\tpqf (\bc_u^1 / M_0; M_1) = \Av (\BI_u / M_0; M_1)$. Let $q_u := \Av (\BI_u / M_2; M_2)$, seen as a type in the variables $\bx_u^1 := \seq{x_i^1 : i \in u}$. Now let $p_u$ be the set of quantifier-free formulas $\phi (\bx_u^1; \bx_u^2)$ such that $\phi (\bx_u^1; \bc_u^2) \in q_u$. Let $p := \bigcup_{u \in [\lambda]^{<\aleph_0}} p_u$.  Note that $p$ is complete as a quantifier-free type.

    Now, for each finite $u \subseteq \lambda$, $p_u$ contains at most $(|\tau (\K)| + \aleph_0)$-many formulas and $\BI_u$ has the much bigger size $\mu^+$. Moreover for each $\phi \in p_u$, all but fewer than $\mu^+$-many elements of $\BI_u$ satisfy $\phi (\bx_u^1; \bc_u^2)$. It follows that $p_u$ is realized in $M_2$. Thus $p$ is strongly finitely $\K$-satisfiable. By Theorem \ref{compactness-thm}, $p$ is $\K$-satisfiable. Let $\bd^1 \bd^2$ realize $p$ (where $\bd^\ell$ realize $p \rest \bx^\ell$) inside some $M \in \K$. Let $M_3 := \cl^M (\bd^1 \bd^2)$. Now the formula ``$\bx_u^1 = \bx_v^2$'' is in $p$ whenever $\bc_u^1 = \bc_v^2$ are in $M_0$. Moreover, $\tpqf (\bd^\ell / \emptyset; M_3) = \tpqf (\bc^\ell / \emptyset; M_1)$. Since Galois and quantifier-free types are the same (in $\K$), sending $\bc^\ell$ to $\bd^\ell$ is a $\K$-embedding $f_\ell : M_\ell \rightarrow M_3$ and $f_1 \rest M_0 = f_2 \rest M_0$. By construction, $\nfs{f_1[M_0]}{f_1[M_1]}{f_2[M_2]}{M_3}$.
  \item \underline{Uniqueness}: We first prove a uniqueness statement for types, as stated in \cite[Claim V.A.4.6(2)]{shelahaecbook2}, then use the fact that quantifier-free types are the same as Galois types as well as the standard argument in \cite[Lemma 12.6]{indep-aec-apal} to get uniqueness in the sense of amalgams.
  \item \underline{Symmetry}: No change from the proof of \cite[Fact 6.10]{categ-universal-2-selecta}, see \cite[Sublemma V.B.2.11]{shelahaecbook2}.

  \item \underline{Base enlargement}: Assume $\nfs{M_0}{M_1}{M_2}{M_3}$ and $M_0 \leap{\K^0} M_2' \leap{\K^0} M_2$. We want to see that $\nfs{M_2'}{\cl^{M_3} (M_2' \cup M_1)}{M_2}{M_3}$. Now by monotonicity we know that $\nfs{M_0}{M_1}{M_2'}{M_3}$, hence by definition of $\nf$ and the previous part $M_2' \leap{\K^0} \cl^{M_3} (M_2' \cup M_1)$. Also, $\cl^{M_3} (\cl^{M_3} (M_2' \cup M_1) \cup M_2) = \cl^{M_3} (M_1 \cup M_2) \leap{\K^0} M_3$ by definition of $\nf$ and the assumption that $\nfs{M_0}{M_1}{M_2}{M_3}$. It remains to see that for any finite sequence $\bc \in \cl^{M_3} (M_2' \cup M_1)$, $\tpqf (\bc / M_2; M_3)$ is averageable over $M_2'$.

    First we show the transitivity property of $\nf$: if $\nfs{N_0}{N_1}{N_2}{N_3}$ and $\nfs{N_2}{N_3}{N_4}{N_5}$, then $\nfs{N_0}{N_1}{N_4}{N_5}$. To see this, let $\bb \in N_1$ be a finite sequence. We want to see that $\tp (\bb / N_4; N_5)$ is averageable over $N_0$, and we know that $\tp (\bb / N_4; N_5)$ is averageable over $N_2$ and $\tp (\bb / N_2; N_5)$ is averageable over $N_0$. To conclude what we want, imitate the standard ``forking calculus'' proof of transitivity from extension and uniqueness (see \cite[Claim II.2.18]{shelahaecbook}), noting that the usual base monotonicity property is trivial for the notion of being averageable over. 

    Now that we have transitivity, we can conclude base enlargement on general grounds using forking calculus: as in the proof of \cite[Claim III.9.6(E)(b)]{shelahaecbook}, there is $M_1'$ and $M_3'$ such that $M_3 \leap{\K^0} M_3'$, $M_1 \lea M_1'$, and $\nfs{M_2'}{M_1'}{M_2}{M_3'}$. Now observe that $\cl^{M_3'} (M_1 \cup M_2') = \cl^{M_3} (M_1 \cup M_2')$ to conclude.
  \end{enumerate}
\end{proof}

\bibliographystyle{amsalpha}
\bibliography{categ-multi}

\end{document}